\documentclass[preprint,12pt]{elsarticle}

\usepackage{amsmath, amssymb, amsthm}
\usepackage{mathrsfs}
\usepackage{dsfont}
\usepackage{graphicx}
\usepackage{xcolor}
\usepackage{tikz}
\usetikzlibrary{positioning, decorations.pathreplacing, calc, shapes.geometric}
\usepackage[margin=2.5cm]{geometry}
\usepackage{hyperref}
\newcommand{\paramsep}{\mathpunct{\text{;}}}

\newtheorem{theorem}{Theorem}[section]
\newtheorem{proposition}[theorem]{Proposition}

\newtheorem{lemma}[theorem]{Lemma}
\newtheorem{corollary}[theorem]{Corollary}
\newtheorem{remark}[theorem]{Remark}

\newtheorem{example}[theorem]{Example}

\numberwithin{equation}{section}
\numberwithin{figure}{section}

\DeclareMathOperator{\rank}{rank}

\begin{document}

\begin{frontmatter}

\title{Trees with exactly three main eigenvalues}

\author[1]{Hangxi Cha}
\ead{2433949@tongji.edu.cn}
\author[1]{Haiying Shan\corref{cor1}}
\ead{shan_haiying@tongji.edu.cn}
\affiliation[1]{organization={School of Mathematical Sciences, Tongji University},
            addressline={},
            city={Shanghai},
            postcode={200092},
            state={},
            country={P.R. China}}
\cortext[cor1]{Corresponding author}
\nonumnote{This work is partially supported by the National Natural Science Foundation of China (No. 12271182).}

\begin{abstract}
An eigenvalue of a graph is called main if its eigenspace is not orthogonal to the all-ones vector. Introduced by Cvetkovi\'{c} in the early 1970s and systematically studied by Rowlinson and others, graphs with exactly one or two main eigenvalues are now well understood. However, the classification of graphs with precisely three main eigenvalues remains a challenging open problem in spectral graph theory.
This paper provides a complete classification of all trees of diameter 5 with exactly three main eigenvalues. Using equitable partitions, the spectral condition reduces to the unique solvability of linear systems over the rationals, leading to Diophantine equations involving branch lengths and pendant counts. We prove that every such tree is isomorphic either to a symmetric tree \(T_r(a)\) or to a member of a parametric family \(\mathcal{T}\) determined by arithmetic divisibility conditions. We also construct an infinite family of such trees with unbounded diameter.
\end{abstract}

\begin{keyword}
main eigenvalue \sep tree \sep equitable partition \sep Gr\"obner basis

\MSC[2020] 05C50 \sep 05C75
\end{keyword}

\end{frontmatter}

\section{Introduction}

All graphs considered in this paper are finite, simple, and undirected. Let $G=(V(G), E(G))$ be a graph with vertex set $V(G)=\{v_1, \dots, v_n\}$ and adjacency matrix $A$. The eigenvalues of $A$, denoted by $\mu_1 \ge \mu_2 \ge \dots \ge \mu_n$, form the spectrum of $G$. Let $\tau_1, \tau_2, \dots, \tau_\nu$ be the distinct eigenvalues of $A$. The spectral decomposition of $A$ is given by $A = \sum_{i=1}^\nu \tau_i P_i$, where $P_i$ represents the orthogonal projection matrix onto the eigenspace $\mathcal{E}(\tau_i)$. Let $\mathbf{j}$ denote the all-ones column vector of dimension $n$. The \emph{main angle} $\beta_i$ of $G$ associated with $\tau_i$ is defined as $\beta_i = \frac{1}{\sqrt{n}} \|P_i \mathbf{j}\|$. An eigenvalue $\tau_i$ is called a \emph{main eigenvalue} if $\beta_i \neq 0$, which means the eigenspace $\mathcal{E}(\tau_i)$ is not orthogonal to $\mathbf{j}$. The set of distinct main eigenvalues, denoted by $\operatorname{Spec}_{\rm M}(G)=\{\lambda_1, \dots, \lambda_k\}$, is called the \emph{main spectrum} of $G$, where $k$ denotes the number of main eigenvalues. Accordingly, the monic polynomial $\varphi_{\rm M}(x) = \prod_{i=1}^k (x-\lambda_i)$ is defined as the \emph{main characteristic polynomial} of $G$.

Main eigenvalues play a fundamental role in the enumeration of walks. For any vertex $v \in V(G)$ and non-negative integer $\ell$, let $d_\ell(v)$ denote the number of walks of length $\ell$ in $G$ starting at vertex $v$. It is a standard result in algebraic graph theory that the vector of these walk counts is given by $A^\ell \mathbf{j}$, that is, $d_\ell(v)$ corresponds to the entry of the vector $A^\ell \mathbf{j}$ indexed by $v$. In particular, $d_0(v) = 1$ for all $v$, and $d_1(v)$ is simply the degree of $v$, often denoted by $d(v)$.
Let $N_\ell$ denote the total number of walks of length $\ell$ in $G$. It is well known that $N_\ell = \sum_{v \in V(G)} d_\ell(v) = \mathbf{j}^\top A^\ell \mathbf{j}$. By substituting the spectral decomposition into this expression and noting that non-main eigenvalues do not contribute to the sum, we obtain the formula
\[
N_\ell = n \sum_{i=1}^k \beta_i^2 \lambda_i^\ell.
\]
This relationship highlights that the walk enumeration is exclusively determined by the main eigenvalues and their corresponding main angles.

A classic problem in spectral graph theory, posed by Cvetkovi\'{c} \cite{CvRoSi}, is to characterize graphs with exactly $k$ main eigenvalues. The case $k=1$ is well understood: a graph has exactly one main eigenvalue if and only if it is regular. For $k \ge 2$, the problem becomes significantly more challenging. Hagos \cite{Hag} proved that $G$ has exactly $k$ main eigenvalues if and only if the rank of the walk matrix $W = [\mathbf{j}, A\mathbf{j}, \dots, A^{n-1}\mathbf{j}]$ is $k$. Note that the columns of $W$ are precisely the walk count vectors $(d_0(v))_{v \in V}, (d_1(v))_{v \in V}, \dots, (d_{n-1}(v))_{v \in V}$. Based on this criterion, classifications have been achieved for graphs with $k=2$, such as trees \cite{HouZhou2005}, unicyclic graphs \cite{HoTi}, bicyclic graphs \cite{HuLiZh}, and tricyclic graphs \cite{FaLuGa}. In contrast, the classification of graphs with exactly three main eigenvalues ($k=3$) remains largely open. Rowlinson \cite{Row} investigated the specific case where $\varphi_{\rm M}(x) = x(x^2 - \mu^2)$, characterizing such graphs as pseudo-semi-regular bipartite graphs. Recently, Fran\c{c}a, Brondani, and Jaume \cite{FrBrJa} made significant progress by characterizing all trees of diameter 4 with exactly three main eigenvalues via equitable partitions.

In this paper, we extend this line of research to trees of diameter 5. We provide a complete classification of all trees of diameter 5 with exactly three main eigenvalues. Our main result shows that the existence of such trees is completely characterized by the solvability of certain systems of Diophantine equations involving their structural parameters. We also give an infinite family of trees with exactly three main eigenvalues and unbounded diameter.

\section{Preliminaries}

A partition $\pi = \{V_1, V_2, \dots, V_m\}$ of the vertex set $V(G)$ is called an equitable partition if for any pair of indices $i, j \in \{1, \dots, m\}$, every vertex $v \in V_i$ has exactly the same number of neighbors in $V_j$. This constant number is denoted by $b_{ij}$.
Given an equitable partition $\pi$, the divisor (or quotient graph) of $G$ is a directed multigraph $D_\pi$ with vertices $V_1, \dots, V_m$ and $b_{ij}$ arcs from $V_i$ to $V_j$. The $m \times m$ matrix $B_\pi = (b_{ij})$ is called the divisor matrix (or quotient matrix), and we denote its characteristic polynomial by $\varphi_{B_\pi}(x) = \det(xI - B_\pi)$. Let $C$ be the $n \times m$ characteristic matrix of $\pi$, where the $j$-th column is the characteristic vector of the cell $V_j$. Note that $C^\top C = \text{diag}(|V_1|, \dots, |V_m|)$.

\begin{proposition}[{\cite[Proposition 3.9.3]{CvRoSi}} ]
Let $G$ be a graph with adjacency matrix $A$. If $\pi$ is an equitable partition with divisor matrix $B_\pi$ and characteristic matrix $C$, then the column space of $C$ is $A$-invariant, and the following relations hold:
\[
\begin{aligned}
AC &= CB_\pi,
  B_\pi = (C^\top C)^{-1} C^\top A C.
\end{aligned}
\]
\end{proposition}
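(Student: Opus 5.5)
The plan is to verify $AC = CB_\pi$ entry by entry, and then derive the other two assertions from it.

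\emph{Step 1: the identity $AC = CB_\pi$.} Fix a vertex $v$ and a cell index $j$. The $(v,j)$ entry of $AC$ is
\[
(AC)_{vj}=\sum_{u\in V_j} a_{vu}=|N(v)\cap V_j|,
\]
the number of neighbours of $v$ lying in $V_j$. Let $V_i$ be the cell containing $v$. The row of $C$ indexed by $v$ is the $i$-th standard basis row vector, so $(CB_\pi)_{vj}=b_{ij}$. By the definition of an equitable partition, every vertex of $V_i$ has exactly $b_{ij}$ neighbours in $V_j$. Hence $(AC)_{vj}=b_{ij}=(CB_\pi)_{vj}$, and so $AC=CB_\pi$.

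\emph{Step 2: $A$-invariance of the column space of $C$.} Take any vector $Cx$ in the column space of $C$. Then
\[
A(Cx)=C(B_\pi x),
\]
which again lies in the column space of $C$.

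\emph{Step 3: the formula for $B_\pi$.} Multiply $AC=CB_\pi$ on the left by $C^\top$ to get
\[
C^\top A C=C^\top C\,B_\pi .
\]
The cells are nonempty, so $C^\top C=\mathrm{diag}(|V_1|,\dots,|V_m|)$ is invertible. Multiplying by its inverse gives $B_\pi=(C^\top C)^{-1}C^\top AC$.

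There is no real obstacle here. The only point needing care is the row-indexing in Step 1: one must use that each row of $C$ has a single nonzero entry, located at the cell containing $v$.
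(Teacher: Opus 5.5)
Your proof is correct: the entrywise count $(AC)_{vj}=|N(v)\cap V_j|=b_{ij}=(CB_\pi)_{vj}$ for $v\in V_i$, followed by left-multiplication by $C^\top$ and inversion of the positive diagonal matrix $C^\top C$, is exactly the standard argument. The paper gives no proof of its own and simply cites \cite[Proposition 3.9.3]{CvRoSi}; that cited source establishes the result in essentially the way you do.
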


This algebraic relationship allows us to relate the spectrum of the divisor to the spectrum of the original graph.

\begin{theorem}[{\cite[Theorem 3.9.5]{CvRoSi}} ]
The characteristic polynomial $\varphi_{B_\pi}(x)$ of any divisor of a graph $G$ divides the characteristic polynomial of $G$.
\end{theorem}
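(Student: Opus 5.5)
The plan is to use the relation $AC = CB_\pi$ from the preceding proposition to build a basis of $\mathbb{R}^n$ in which $A$ is block upper triangular, with $B_\pi$ as one diagonal block. Once we have such a similarity, the characteristic polynomial of $A$ factors as a product of the block characteristic polynomials, and $\varphi_{B_\pi}(x)$ is one of the factors. This gives the divisibility directly.

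First, the columns of $C$ are linearly independent. They are characteristic vectors of disjoint nonempty cells, and $C^\top C$ is invertible. Extend these $m$ columns by vectors $D$ (an $n\times(n-m)$ matrix) to a basis of $\mathbb{R}^n$, and set $S=[C\mid D]$, which is invertible.

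By the proposition, the column space of $C$ is $A$-invariant, and $AC = CB_\pi$. Hence
\[
S^{-1}AS=\begin{pmatrix} B_\pi & X\\ 0 & Y\end{pmatrix}
\]
for some matrices $X$ and $Y$. The first $m$ columns of $S^{-1}AS$ are $S^{-1}AC = S^{-1}CB_\pi$, and $S^{-1}C$ is the first $m$ columns of the identity. Taking determinants of $xI - S^{-1}AS$ then gives
\[
\det(xI-A)=\varphi_{B_\pi}(x)\det(xI-Y).
\]
Both factors are monic polynomials. If one wants the quotient to have integer coefficients, this follows from Gauss's lemma, since both polynomials involved are monic integer polynomials.

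An equally clean alternative is to choose $D$ to span the orthogonal complement of the column space of $C$. Because $A$ is symmetric, that complement is also $A$-invariant, so $X=0$. Either way, there is no real obstacle here. The only point needing care is that $S^{-1}C$ is exactly the first $m$ columns of $I$, which is what makes $B_\pi$ appear as a diagonal block.
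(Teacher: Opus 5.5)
Your proof is correct and is the standard argument for this result. The paper does not prove it but quotes it from \cite[Theorem 3.9.5]{CvRoSi}, where the argument is the same: $A$ restricted to the $A$-invariant column space of $C$ is represented by $B_\pi$ in the basis of characteristic vectors, and the resulting block-triangular form gives $\varphi_{B_\pi}(x)\mid\det(xI-A)$. As a minor aside, integrality of the quotient needs only long division by the monic integer polynomial $\varphi_{B_\pi}(x)$, not Gauss's lemma.
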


\begin{remark}
As a consequence of the relation $AC=CB_\pi$, for any polynomial $f(x) \in \mathbb{R}[x]$, we have 
\[
    f(A)C = Cf(B_\pi).
\]
Furthermore, a vector $v \in \mathbb{R}^m$ is an eigenvector of $B_\pi$ if and only if $Cv$ is an eigenvector of $A$. This implies that the lifted eigenvectors $Cv$ of $A$ associated with the eigenvalues of the divisor are constant on the cells of $\pi$.
\end{remark}

\begin{lemma}[{\cite[Theorem 2.5]{Ter}} ] \label{le2.4}
Let $f(x) \in \mathbb{R}[x]$. Then $f(A)\mathbf{j} = \mathbf{0}$ if and only if $\varphi_M(x)$ divides $f(x)$.
\end{lemma}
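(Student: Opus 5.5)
We prove the two implications separately, using the spectral decomposition $A=\sum_{i=1}^{\nu}\tau_i P_i$ from the introduction. Write $\mathbf{j}=\sum_{i=1}^{\nu}P_i\mathbf{j}$. By definition of the main spectrum, $P_i\mathbf{j}\neq\mathbf{0}$ exactly when $\tau_i$ is one of the main eigenvalues $\lambda_1,\dots,\lambda_k$. Hence
\[
\mathbf{j}=\sum_{s=1}^{k}\mathbf{x}_s,
\]
where each $\mathbf{x}_s=P_{i_s}\mathbf{j}$ is a nonzero eigenvector of $A$ for $\lambda_s$.

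Applying any polynomial $f$ to this decomposition gives
\[
f(A)\mathbf{j}=\sum_{s=1}^{k} f(\lambda_s)\,\mathbf{x}_s .
\]
If $\varphi_{\rm M}(x)$ divides $f(x)$, then every main eigenvalue $\lambda_s$ is a root of $f$. So each term in the sum vanishes, and $f(A)\mathbf{j}=\mathbf{0}$.

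For the converse, suppose $f(A)\mathbf{j}=\mathbf{0}$. The vectors $\mathbf{x}_1,\dots,\mathbf{x}_k$ are nonzero and lie in eigenspaces for distinct eigenvalues. These eigenspaces are mutually orthogonal because $A$ is symmetric, so the $\mathbf{x}_s$ are linearly independent. It follows that $f(\lambda_s)=0$ for every $s$. Thus $f$ vanishes at each of the $k$ distinct numbers $\lambda_1,\dots,\lambda_k$. Since $\varphi_{\rm M}$ is the product of the distinct linear factors $x-\lambda_s$, it divides $f$ in $\mathbb{R}[x]$.

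The one step that needs care is the linear independence of the $\mathbf{x}_s$. It rests only on orthogonality of eigenspaces for distinct eigenvalues and on each $\mathbf{x}_s$ being nonzero, which is exactly the definition of main. The rest is routine. An equivalent way to phrase the result: $\varphi_{\rm M}$ is the minimal polynomial of $A$ restricted to the cyclic subspace generated by $\mathbf{j}$, and $\{f : f(A)\mathbf{j}=\mathbf{0}\}$ is the ideal this polynomial generates.
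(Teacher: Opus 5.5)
Your proof is correct and complete. Writing $\mathbf{j}=\sum_i P_i\mathbf{j}$ as a sum of mutually orthogonal eigencomponents, nonzero exactly for the main eigenvalues, reduces $f(A)\mathbf{j}=\mathbf{0}$ to $f(\lambda_s)=0$ for every main eigenvalue. Since the $\lambda_s$ are distinct reals, this is equivalent to $\varphi_{\rm M}\mid f$ in $\mathbb{R}[x]$. The paper gives no proof of its own here and simply cites Teranishi's Theorem 2.5, so there is nothing to compare against; your spectral-decomposition argument is the standard proof of this fact.
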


By combining the concepts of equitable partitions and main eigenvalues, we arrive at a fundamental result linking the divisor's spectrum to the main eigenvalues of the graph.

\begin{theorem}[{\cite[Theorem 3.9.9]{CvRoSi}} ]
The characteristic polynomial $\varphi_{B_\pi}(x)$ of a divisor of a graph $G$ is divisible by $\varphi_M(x)$.
\end{theorem}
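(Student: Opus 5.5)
The plan is to use the trivial equitable partition structure to realize $\mathbf{j}$ as a lifted vector, and then apply Lemma~\ref{le2.4}. Let $\mathbf{j}_m$ denote the all-ones vector of length $m$. Since the cells of $\pi$ partition $V(G)$, every vertex lies in exactly one cell, so $C\mathbf{j}_m=\mathbf{j}$.

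Set $f(x)=\varphi_{B_\pi}(x)$. By the Cayley--Hamilton theorem, $f(B_\pi)=0$. The Remark following the relation $AC=CB_\pi$ gives $f(A)C=Cf(B_\pi)$. Combining these,
\[
f(A)\mathbf{j}=f(A)C\mathbf{j}_m=Cf(B_\pi)\mathbf{j}_m=C\,0\,\mathbf{j}_m=\mathbf{0}.
\]
Lemma~\ref{le2.4} then immediately gives that $\varphi_M(x)$ divides $f(x)=\varphi_{B_\pi}(x)$, which is the claim.

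There is essentially no obstacle here. The only points to check are that $C\mathbf{j}_m=\mathbf{j}$ (this holds because $\pi$ is a partition of all of $V(G)$) and that Lemma~\ref{le2.4} applies to arbitrary real polynomials, which includes $\varphi_{B_\pi}$, a monic polynomial with integer coefficients.

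If one preferred to avoid Lemma~\ref{le2.4}, the alternative route would be as follows. Write $\mathbf{j}=\sum_i P_i\mathbf{j}$. Then $f(A)\mathbf{j}=\sum_i f(\tau_i)P_i\mathbf{j}$, and the vectors $P_i\mathbf{j}$ lie in mutually orthogonal eigenspaces. So $f(A)\mathbf{j}=\mathbf{0}$ forces $f(\lambda_i)=0$ for every main eigenvalue $\lambda_i$, since $P_i\mathbf{j}\neq\mathbf{0}$ for those. Because the $\lambda_i$ are distinct and $\varphi_M$ is squarefree, $\varphi_M\mid f$. This is exactly the content of Lemma~\ref{le2.4}, so invoking the lemma suffices.
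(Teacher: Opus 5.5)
Your proof is correct: $C\mathbf{j}_m=\mathbf{j}$, Cayley--Hamilton for $B_\pi$, and $f(A)C=Cf(B_\pi)$ give $\varphi_{B_\pi}(A)\mathbf{j}=\mathbf{0}$, and Lemma~\ref{le2.4} then yields $\varphi_M \mid \varphi_{B_\pi}$. The paper gives no proof of its own, since it quotes the result from \cite[Theorem 3.9.9]{CvRoSi}; your argument is the combination its preliminaries set up, with the Remark on $f(A)C=Cf(B_\pi)$ and Lemma~\ref{le2.4} placed immediately before the statement.
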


Since $\varphi_M(x)$ divides $\varphi_{B_\pi}(x)$, it follows that the set of main eigenvalues is a subset of the eigenvalues of the divisor. Consequently, the number of main eigenvalues of $G$ is bounded above by the degree of $\varphi_{B_\pi}(x)$, which is equal to the order of the divisor (i.e., the number of cells $m$ in the equitable partition $\pi$).

\begin{theorem}[{\cite[Theorem 2.1]{Hag}} ]\label{thm:walkrank}
The rank of the walk-matrix $W(G)$ is equal to the number of main eigenvalues.
\end{theorem}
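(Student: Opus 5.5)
The plan is to identify $\operatorname{rank} W(G)$ with $\dim \mathcal{K}$, where $\mathcal{K}=\operatorname{span}\{A^\ell\mathbf{j}:\ell\ge 0\}$ is the Krylov space generated by $\mathbf{j}$, and then to show that $\dim\mathcal{K}=k$ through the spectral decomposition $A=\sum_{i=1}^{\nu}\tau_iP_i$. First we check that the columns of $W$ span all of $\mathcal{K}$. The minimal polynomial of $A$ has degree at most $n$, so every $A^\ell\mathbf{j}$ with $\ell\ge n$ is a linear combination of $\mathbf{j},A\mathbf{j},\dots,A^{n-1}\mathbf{j}$. Hence $\operatorname{rank}W=\dim\mathcal{K}$.

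Next we write $\mathbf{j}=\sum_{i=1}^{\nu}P_i\mathbf{j}$. In this sum $P_i\mathbf{j}\neq\mathbf{0}$ exactly when $\tau_i$ is main, because $\|P_i\mathbf{j}\|=\sqrt{n}\,\beta_i$. Therefore $A^\ell\mathbf{j}=\sum_{i=1}^{k}\lambda_i^\ell\,\mathbf{u}_i$, where $\mathbf{u}_i$ is the nonzero projection of $\mathbf{j}$ onto $\mathcal{E}(\lambda_i)$. This gives the inclusion $\mathcal{K}\subseteq\operatorname{span}\{\mathbf{u}_1,\dots,\mathbf{u}_k\}$. The vectors $\mathbf{u}_i$ lie in mutually orthogonal eigenspaces and are nonzero, so they are linearly independent, and hence $\dim\mathcal{K}\le k$.

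For the reverse inequality we show that each $\mathbf{u}_i$ belongs to $\mathcal{K}$. The $\lambda_i$ are distinct, so we can take the Lagrange interpolation polynomial $L_i(x)=\prod_{j\ne i}(x-\lambda_j)/(\lambda_i-\lambda_j)$. It satisfies $L_i(A)\mathbf{j}=\sum_j L_i(\lambda_j)\mathbf{u}_j=\mathbf{u}_i$, and this vector is a combination of the $A^\ell\mathbf{j}$. Equivalently, the coefficient matrix $(\lambda_i^\ell)$ with $0\le\ell\le k-1$ is a nonsingular Vandermonde matrix. Thus $\dim\mathcal{K}=k$, which means $\operatorname{rank}W(G)=k$.

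I expect no step here to be a real obstacle. The only point that needs care is the second step: it must be justified that non-main eigenvalues drop out of the sum exactly, which is the equivalence $\beta_i=0\iff P_i\mathbf{j}=\mathbf{0}$, and that the surviving components are independent, which follows from orthogonality of distinct eigenspaces of the symmetric matrix $A$. An alternative route would invoke Lemma~\ref{le2.4}. Then $f(A)\mathbf{j}=\mathbf{0}$ exactly for $f$ divisible by $\varphi_M$, so $\varphi_M$ is the minimal polynomial of $\mathbf{j}$ relative to $A$, and the standard fact that the cyclic subspace generated by $\mathbf{j}$ has dimension equal to the degree of that minimal polynomial gives the result at once.
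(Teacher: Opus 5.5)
Your proof is correct and complete. The paper gives no proof of its own here; it simply cites Hagos. Your argument is essentially the standard one: the identity $A^\ell\mathbf{j}=\sum_{i=1}^{k}\lambda_i^\ell\mathbf{u}_i$ is the factorization $W=[\mathbf{u}_1,\dots,\mathbf{u}_k]\,V$, where $V=(\lambda_i^{\ell})_{1\le i\le k,\;0\le \ell\le n-1}$ is a Vandermonde matrix. The first factor has full column rank by orthogonality, and $V$ has full row rank because the $\lambda_i$ are distinct (reading $\lambda_i^0=1$ also when $\lambda_i=0$).
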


\begin{lemma}[{\cite[Proposition 3]{HoTi}} ]\label{le2.7}
Let $\lambda_1, \lambda_2, \ldots, \lambda_k$ be all the main eigenvalues of $G$. Then $\varphi_M(x) = (x - \lambda_1)(x - \lambda_2)\cdots(x - \lambda_k)$ is an integral polynomial.
\end{lemma}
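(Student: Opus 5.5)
We show that $\varphi_{\rm M}(x)$ is the minimal polynomial of $A$ restricted to the cyclic subspace generated by $\mathbf{j}$, and then use that $A$ and $\mathbf{j}$ have integer entries.

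\textbf{Step 1: $\varphi_{\rm M}$ generates the annihilating ideal of $\mathbf{j}$.} Let $I=\{f\in\mathbb{Q}[x] : f(A)\mathbf{j}=\mathbf{0}\}$. This is an ideal of $\mathbb{Q}[x]$. It is nonzero, since it contains the characteristic polynomial of $A$, which has integer coefficients. Write $\mathbf{j}=\sum_{i=1}^{\nu} P_i\mathbf{j}$. Then for any real polynomial $f$,
\[
f(A)\mathbf{j}=\sum_{i=1}^{\nu} f(\tau_i)P_i\mathbf{j}.
\]
The nonzero vectors $P_i\mathbf{j}$ lie in mutually orthogonal eigenspaces, so they are linearly independent. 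Hence $f(A)\mathbf{j}=\mathbf{0}$ exactly when $f(\lambda_i)=0$ for every main eigenvalue $\lambda_i$. Since the $\lambda_i$ are distinct, this happens exactly when $\varphi_{\rm M}$ divides $f$. This is Lemma~\ref{le2.4}. Consequently $\varphi_{\rm M}$ is the monic generator of the annihilating ideal over $\mathbb{R}[x]$.

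\textbf{Step 2: rationality.} The ideal $I$ is defined over $\mathbb{Q}$, so its monic generator $g$ has rational coefficients. We claim $g=\varphi_{\rm M}$. Let $k$ be the rank of the Krylov vectors $\mathbf{j},A\mathbf{j},\dots$. Then $k=\deg\varphi_{\rm M}$, and this rank is the same whether computed over $\mathbb{Q}$ or over $\mathbb{R}$. The minimal $k$ with $A^k\mathbf{j}$ in the span of $\mathbf{j},\dots,A^{k-1}\mathbf{j}$ is therefore the same over both fields. The coefficients of that linear dependence are unique, and a rational linear system with a unique real solution has a rational solution. So $\varphi_{\rm M}\in\mathbb{Q}[x]$.

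\textbf{Step 3: integrality, the main point.} Each $\lambda_i$ is an eigenvalue of the integer matrix $A$, so it is a root of the monic integer polynomial $\det(xI-A)$. Thus every $\lambda_i$ is an algebraic integer. The coefficients of $\varphi_{\rm M}$ are elementary symmetric functions of the $\lambda_i$, so they are algebraic integers. By Step 2 they are also rational. A rational algebraic integer is an integer, so $\varphi_{\rm M}\in\mathbb{Z}[x]$.

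An alternative route to Step 3 is Gauss's lemma. Since $\varphi_{\rm M}$ is monic, rational, and divides the monic integer polynomial $\det(xI-A)$ in $\mathbb{Q}[x]$, it must lie in $\mathbb{Z}[x]$. We expect the only delicate point in the whole argument to be Step 2, the descent from real to rational coefficients, which the uniqueness argument handles.
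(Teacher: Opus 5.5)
Your proof is correct. The paper gives no proof of this lemma: it quotes it from Hou and Tian \cite[Proposition 3]{HoTi}. So your argument stands as a self-contained replacement rather than something to match against.

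Step~1 is Lemma~\ref{le2.4}. Step~2 correctly obtains rationality: the coefficients of $\varphi_{\rm M}$ are the unique solution of the rational system $[\mathbf{j},A\mathbf{j},\dots,A^{k-1}\mathbf{j}]\,c=A^{k}\mathbf{j}$, and both rank and consistency are unchanged under the extension $\mathbb{Q}\subset\mathbb{R}$. Step~3 correctly passes from $\mathbb{Q}$ to $\mathbb{Z}$, either via algebraic integers or via Gauss's lemma.

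The detour through the generator $g$ of $I$ is superfluous. You never use $g$, and the equality $g=\varphi_{\rm M}$ only follows after you have already shown $\varphi_{\rm M}\in\mathbb{Q}[x]$.

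A frequently used alternative gets rationality Galois-theoretically. The eigenspace of a main eigenvalue $\lambda$ has a basis in $\mathbb{Q}(\lambda)^n$, and some basis vector $x$ has $\mathbf{j}^{\top}x\neq 0$. Applying any embedding $\sigma$ of $\mathbb{Q}(\lambda)$ shows that $\sigma(\lambda)$ is again main. Hence $\varphi_{\rm M}$ is a product of minimal polynomials of algebraic integers. That route yields the structural fact that the main spectrum is closed under algebraic conjugation.

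Your route is purely linear-algebraic and fits the paper's proof of Theorem~\ref{t1} well. It shows that the unique coefficients $c_i$ there are automatically rational. The only arithmetic input then needed for $c_i\in\mathbb{Z}$ is that eigenvalues of an integer matrix are algebraic integers.
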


\begin{proposition}[{\cite[Lemma 2.4]{HuHuLu}} ]\label{prop:quotient-walk-rank}
Let $\pi = \{V_1, V_2, \dots, V_r\}$ be an equitable partition of $G$, and let $B_\pi$ be the quotient matrix of $G$ with respect to $\pi$. Then the number of main eigenvalues of $G$ is equal to $\operatorname{rank}(W(B_\pi))$.
\end{proposition}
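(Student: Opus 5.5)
The plan is to transfer the walk matrix of $G$ to the quotient through the characteristic matrix $C$, and then use that $C$ has full column rank. Write $\mathbf{j}_m$ for the all-ones vector in $\mathbb{R}^m$. Because every vertex lies in exactly one cell, $C\mathbf{j}_m = \mathbf{j}$.

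From the Remark following Proposition 3.9.3, $f(A)C = Cf(B_\pi)$ for every polynomial $f$. Applying this to $\mathbf{j}_m$ with $f(x)=x^\ell$ gives
\[
A^\ell \mathbf{j} = A^\ell C\mathbf{j}_m = C B_\pi^\ell \mathbf{j}_m \qquad (\ell \ge 0).
\]
We define $W(B_\pi)$ as the matrix whose columns are $\mathbf{j}_m, B_\pi\mathbf{j}_m, \dots, B_\pi^{N}\mathbf{j}_m$ for a suitable range of $N$. Then the walk matrix satisfies $W(G) = C\,W(B_\pi)$, at least once the two matrices have the same number of columns.

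The number of columns needs care, since $W(G)$ uses powers up to $n-1$ and $W(B_\pi)$ naturally uses powers up to $m-1$. By Cayley--Hamilton applied to $B_\pi$, every vector $B_\pi^\ell\mathbf{j}_m$ with $\ell\ge m$ lies in the span of $\mathbf{j}_m,\dots,B_\pi^{m-1}\mathbf{j}_m$. Hence the column space of $[\mathbf{j}_m, B_\pi\mathbf{j}_m,\dots,B_\pi^{n-1}\mathbf{j}_m]$ equals that of the $m$-column Krylov matrix $W(B_\pi)$, and appending these columns does not change the rank. This is the only point that needs attention, and it is routine.

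The columns of $C$ are nonzero characteristic vectors of disjoint cells, so they are linearly independent; equivalently, $C^\top C$ is an invertible diagonal matrix. Therefore left multiplication by $C$ is injective and preserves rank: $\rank(C X) = \rank X$ for any $X$ with $m$ rows.

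Combining the three steps gives $\rank W(G) = \rank W(B_\pi)$. Theorem \ref{thm:walkrank} identifies $\rank W(G)$ with the number of main eigenvalues of $G$, which completes the proof.
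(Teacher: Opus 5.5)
Your proof is correct and complete. The identity $A^\ell\mathbf{j}=CB_\pi^\ell\mathbf{j}_m$, trimming the extra columns by Cayley--Hamilton (which implicitly uses $n\ge m$, true because the cells are nonempty), injectivity of $C$, and Hagos's theorem together form the standard argument; the paper gives no proof of its own here and simply cites \cite[Lemma 2.4]{HuHuLu}.
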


\begin{theorem}\label{t1}
Let $G$ be a non-regular connected graph. Then $G$ has exactly $k$ main eigenvalues if and only if there exist unique integers $c_0, c_1, \dots, c_{k-1}$ such that
\begin{equation}\label{eq:scalar_relation}
d_k(v) = c_0 d_{k-1}(v) + c_1 d_{k-2}(v) + \dots +c_{k-2}d(v)+c_{k-1}
\end{equation}
holds for every vertex $v \in V$.
\end{theorem}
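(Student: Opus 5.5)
We use Lemma \ref{le2.4}, Lemma \ref{le2.7} and Theorem \ref{thm:walkrank}. Write the relation (\ref{eq:scalar_relation}) in vector form as $A^k\mathbf{j} = c_0A^{k-1}\mathbf{j}+\dots+c_{k-1}\mathbf{j}$, i.e.\ $f(A)\mathbf{j}=\mathbf{0}$ with $f(x)=x^k-c_0x^{k-1}-\dots-c_{k-1}$. Note that $d_\ell(v)$ is the $v$-entry of $A^\ell\mathbf{j}$ and $d_0(v)=1$, so the constant term $c_{k-1}$ is the coefficient of $\mathbf{j}$.

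For the forward direction, suppose $G$ has exactly $k$ main eigenvalues. Lemma \ref{le2.4} gives $\varphi_M(A)\mathbf{j}=\mathbf{0}$, since $\varphi_M$ divides itself. By Lemma \ref{le2.7}, $\varphi_M$ is monic of degree $k$ with integer coefficients. Writing $\varphi_M(x)=x^k-c_0x^{k-1}-\dots-c_{k-1}$ therefore gives integers $c_i$ satisfying (\ref{eq:scalar_relation}). For uniqueness, suppose two such coefficient tuples existed. Subtracting the two relations yields a nonzero polynomial $g$ of degree $<k$ with $g(A)\mathbf{j}=\mathbf{0}$. Lemma \ref{le2.4} then forces $\varphi_M\mid g$, which is impossible by degree. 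The same argument shows that the vectors $\mathbf{j},A\mathbf{j},\dots,A^{k-1}\mathbf{j}$ are linearly independent, so uniqueness holds even among real coefficients.

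For the converse, suppose unique integers $c_0,\dots,c_{k-1}$ satisfy (\ref{eq:scalar_relation}). Then $f(A)\mathbf{j}=\mathbf{0}$, and Lemma \ref{le2.4} gives $\varphi_M\mid f$, so the number $k'$ of main eigenvalues satisfies $k'\le k$. We must exclude $k'<k$. If $k'<k$, then $\varphi_M$ is a monic integer polynomial of degree $k'<k$. Hence $x^{k-k'}\varphi_M(x)$ and $(x-1)x^{k-k'-1}\varphi_M(x)$ are two distinct monic integer polynomials of degree $k$, both multiples of $\varphi_M$. Each yields a tuple of integers satisfying (\ref{eq:scalar_relation}), and the two tuples are distinct, contradicting uniqueness. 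Therefore $k'=k$. Alternatively, one can use Theorem \ref{thm:walkrank}: uniqueness forces $\mathbf{j},\dots,A^{k-1}\mathbf{j}$ to be independent, so the rank of $W$ is at least $k$.

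The main subtlety is the role of the hypotheses. The non-regular and connected assumptions are not needed for the equivalence itself; they only guarantee $k\ge 2$, so that the term $d(v)=d_1(v)$ genuinely appears and the statement is meaningful. The step needing most care is the converse's exclusion of $k'<k$, in particular making sure the second multiple has integer coefficients, which the choice of $(x-1)$ as the extra factor ensures.
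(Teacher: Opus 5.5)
Your proof is correct. The forward direction is essentially the paper's. The paper takes uniqueness from the linear independence of $\mathbf{j},A\mathbf{j},\dots,A^{k-1}\mathbf{j}$ supplied by Theorem~\ref{thm:walkrank}. You take it from Lemma~\ref{le2.4} by a degree argument, which amounts to the same thing.

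The converse is where you take a genuinely different route. The paper argues that uniqueness of the integer coefficients forces $\mathbf{j},\dots,A^{k-1}\mathbf{j}$ to be linearly independent, so $\operatorname{rank} W = k$, and then Hagos' theorem (Theorem~\ref{thm:walkrank}) finishes. That step silently uses two facts. First, a real dependence among these integer vectors can be rescaled to an integer one, so adding it to the given relation yields a second \emph{integer} tuple. Second, the relation itself caps the rank at $k$.

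Your argument uses only Lemmas~\ref{le2.4} and~\ref{le2.7}. Divisibility $\varphi_M\mid f$ gives $k'\le k$. If $k'<k$, the explicit integer multiples $x^{k-k'}\varphi_M$ and $(x-1)x^{k-k'-1}\varphi_M$ give two distinct integer tuples. So integrality is handled by construction, and Hagos' theorem is not needed in this direction.

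The paper's route is shorter and ties the statement directly to the walk-matrix rank used later (e.g.\ Proposition~\ref{prop:quotient-walk-rank}). Yours is more self-contained and makes the \emph{unique integers} hypothesis do visible work. Your observation that the non-regular and connected hypotheses play no role in the equivalence itself is also right.
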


\begin{proof}
Condition \eqref{eq:scalar_relation} holds for all $v$ if and only if the following vector equation holds:
\[
A^k \mathbf{j} = c_0 A^{k-1} \mathbf{j} + c_1 A^{k-2} \mathbf{j} + \dots + c_{k-1} \mathbf{j}.
\]

Suppose $G$ has exactly $k$ main eigenvalues $\lambda_1, \dots, \lambda_k$. By Theorem~\ref{thm:walkrank}, the rank of the walk matrix $W = [\mathbf{j}, A\mathbf{j}, \dots, A^{n-1}\mathbf{j}]$ is $k$. Consequently, the set of vectors $\{\mathbf{j}, A\mathbf{j}, \dots, A^{k-1}\mathbf{j}\}$ is linearly independent and forms a basis for the $A$-invariant subspace generated by $\mathbf{j}$. This implies that $A^k \mathbf{j}$ can be uniquely expressed as a linear combination of these vectors.

Let $\varphi_M(x) = \prod_{i=1}^k (x - \lambda_i) = x^k - \sum_{i=0}^{k-1} c_i x^{k-1-i}$. By Lemma \ref{le2.7}, $\varphi_M(x)$ is a monic integral polynomial, and $c_i \in \mathbb{Z}$. By Lemma \ref{le2.4}, $\varphi_M(A)\mathbf{j} = \mathbf{0}$, which yields $A^k \mathbf{j} = \sum_{i=0}^{k-1} c_i A^{k-1-i} \mathbf{j}$. Thus, integer coefficients exist. The uniqueness of these integers is guaranteed by the linear independence of the basis vectors.

Conversely, suppose that there exist unique integers $c_0, c_1, \dots, c_{k-1}$ such that \eqref{eq:scalar_relation} holds for every vertex $v$. This condition is equivalent to the vector equation
$
A^k \mathbf{j} = \sum_{i=0}^{k-1} c_i A^{k-1-i} \mathbf{j}.
$
The uniqueness of the coefficients $c_i$ implies that the set of vectors $\{\mathbf{j}, A\mathbf{j}, \dots, A^{k-1}\mathbf{j}\}$ is linearly independent. Consequently, the rank of the walk matrix $W$ is exactly $k$. By Theorem~\ref{thm:walkrank}, it follows that $G$ has exactly $k$ main eigenvalues.
\end{proof}

\begin{corollary}\label{1c}
Let $G$ be a non-regular connected graph with equitable partition $\pi=\{V_1,V_2,\ldots,V_m\}$. Then $G$ has exactly $k$ main eigenvalues if and only if there exist unique integers $c_0, c_1, \dots, c_{k-1}$ such that
$$
d_k(v_i) = c_0 d_{k-1}(v_i) + c_1 d_{k-2}(v_i) + \dots +c_{k-2}d(v_i)+c_{k-1}.
$$
for every vertex $v_i \in V_i$.
\end{corollary}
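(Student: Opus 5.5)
The plan is to reduce Corollary~\ref{1c} to Theorem~\ref{t1}, using the fact that each walk count $d_\ell(v)$ is constant on the cells of an equitable partition. Once that is known, the condition "for every vertex $v\in V$" in Theorem~\ref{t1} is equivalent to the same condition for one representative $v_i$ from each cell $V_i$, so the two statements coincide.

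\textbf{Step 1: constancy of walk counts on cells.} By the Proposition above, $AC=CB_\pi$. Hence $f(A)C=Cf(B_\pi)$ for every polynomial $f$, and in particular $A^\ell C=CB_\pi^\ell$. Since the cells partition $V$, we have $\mathbf{j}=C\mathbf{1}_m$, where $\mathbf{1}_m$ is the all-ones vector in $\mathbb{R}^m$. Therefore
\[
A^\ell\mathbf{j}=CB_\pi^\ell\mathbf{1}_m .
\]
Every vector of the form $Cw$ is constant on each cell, so $d_\ell(v)=(B_\pi^\ell\mathbf{1}_m)_i$ for all $v\in V_i$ and all $\ell\ge 0$. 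We will write $d_\ell(v_i)$ for this common value.

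\textbf{Step 2: equivalence of the relations.} For any integers $c_0,\dots,c_{k-1}$, the vector identity
\[
A^k\mathbf{j}=\sum_{t=0}^{k-1} c_t A^{k-1-t}\mathbf{j}
\]
holds if and only if
\[
C\Bigl(B_\pi^k\mathbf{1}_m-\sum_{t=0}^{k-1} c_tB_\pi^{k-1-t}\mathbf{1}_m\Bigr)=\mathbf{0}.
\]
The matrix $C$ has full column rank, since $C^\top C$ is diagonal with positive entries. So this holds if and only if the bracketed vector is zero, which is exactly the cellwise identity $d_k(v_i)=\sum_t c_t d_{k-1-t}(v_i)$ for $i=1,\dots,m$. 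Thus the set of integer tuples satisfying the relation at all vertices equals the set satisfying it at the representatives $v_1,\dots,v_m$. Existence and uniqueness therefore transfer verbatim, and Theorem~\ref{t1} gives the claim.

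The only point that needs care is the uniqueness clause. Because the two solution sets are literally equal, "unique" means the same thing on both sides, and nothing further is needed. I do not expect a real obstacle; the key step is the observation $\mathbf{j}=C\mathbf{1}_m$ together with the injectivity of $C$. Alternatively, one could invoke Proposition~\ref{prop:quotient-walk-rank}, but the direct argument above is simpler.
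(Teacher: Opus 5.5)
Your proposal is correct and is essentially the argument the paper intends: the paper states this corollary without a separate proof, treating it as an immediate consequence of Theorem~\ref{t1} once one notes (via $f(A)C=Cf(B_\pi)$ and $\mathbf{j}=C\mathbf{1}_m$) that each walk count $d_\ell$ is constant on the cells of $\pi$. Your observation that the two sets of admissible integer tuples coincide is what makes the uniqueness clause transfer, so the reduction is complete.
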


\section{Trees of diameter 5 with 3 main eigenvalues}

A tree of diameter 5 is obtained by joining two adjacent vertices $u_1$ and $u_2$ to the centers of $r, s \geq 1$ stars, denoted by $K_{1,a_1}, K_{1,a_2}, \ldots, K_{1,a_r}$ and $K_{1,b_1}, K_{1,b_2}, \ldots, K_{1,b_s}$ respectively, and to sets of $p, q \geq 0$ pendant vertices attached directly to $u_1$ and $u_2$, respectively.

Such a tree is denoted by $T^{p,q}(a_1, \ldots, a_r \paramsep b_1, \ldots, b_s)$, where $1 \leq a_1 \leq a_2 \leq \cdots \leq a_r$ and $1 \leq b_1 \leq b_2 \leq \cdots \leq b_s$.
When $a_i = a$ for $1 \leq i \leq r$, and $b_j = b$ for $1 \leq j \leq s$, the tree is denoted by $T_{r,s}^{p,q}(a,b)$.
If $p=q=0$, we simply write $T(a_1, \ldots, a_r \paramsep b_1, \ldots, b_s)$ .
Furthermore, if $p=q$, $r=s$, and $a=b$, we denote the symmetric tree by $T_{r}^{p}(a)$.

We define the partition 
\[
\pi(T^{p,q}(a_1, \ldots, a_r \paramsep b_1, \ldots, b_s)) = \{V_1, V_2, \ldots, V_{2r+2s+4}\}
\]
of the vertex set of $T^{p,q}(a_1, \ldots, a_r \paramsep b_1, \ldots, b_s)$ as follows. Let the vertices be labeled such that:

    For $1 \leq k \leq r$, $V_k$ is the set of pendent vertices of the $k$-th star attached to $u_1$.
     For $1 \leq k \leq s$, $V_{r+k}$ is the set of pendent vertices of the $k$-th star attached to $u_2$.
     For $1 \leq k \leq r$, $V_{r+s+k}$ equals $w_k,$ the center of the $k$-th star attached to $u_1$.
     For $1 \leq k \leq s$, $V_{2r+s+k}$ equals $z_k,$ the center of the $k$-th star attached to $u_2$.
   $V_{2r+2s+1} = \{u_1\}$ , $V_{2r+2s+2} = \{u_2\}$.
    $V_{2r+2s+3}$ is the set of the $p$ pendant vertices adjacent to $u_1$.  $V_{2r+2s+4}$ is the set of the $q$ pendant vertices adjacent to $u_2$.

It is straightforward to verify that this partition is an equitable partition of the tree.
Figure~\ref{fig:diam5-partition} displays the $\pi(T^{p,q}(a_1, \ldots, a_r \paramsep b_1, \ldots, b_s))$.

\def\myScale{0.85}      
\def\xRoot{2.2}         
\def\yTop{1}           
\def\yMid{-1.5}         
\def\yBot{-2.5}         
\def\xTopSpread{0.6}    
\def\xMidOuter{3.4}     
\def\xMidInner{1.0}     
\def\leafSpread{0.7}    
\begin{figure}
    \centering
    \begin{tikzpicture}[
    scale=\myScale, 
    every node/.style={scale=\myScale},
    vertex/.style={circle, fill=black, inner sep=0pt, minimum size=4pt},
    edge/.style={draw=black, thick}, 
    mylabel/.style={font=\small, text=black},
    brace/.style={decorate, decoration={brace, amplitude=4pt}, thick},
    brace_mirror/.style={decorate, decoration={brace, amplitude=4pt, mirror}, thick},
    brace_label/.style={midway, font=\scriptsize, align=center, inner sep=4pt}
]

    \coordinate (v1) at (-\xRoot, 0);
    \coordinate (v2) at (\xRoot, 0);

    \node[vertex] (N_v1) at (v1) {};
    \node[vertex] (N_v2) at (v2) {};
    \draw[edge] (N_v1) -- (N_v2);

    \node[mylabel, left=6pt] at (N_v1) {$u_1$};
    \node[mylabel, right=6pt] at (N_v2) {$u_2$};

    \coordinate (cL) at ($(v1) + (-\xTopSpread, \yTop)$);
    \coordinate (cR) at ($(v1) + (\xTopSpread, \yTop)$);
    \node[vertex] (N_cL) at (cL) {};
    \node[vertex] (N_cR) at (cR) {};
    \node at ($(cL)!0.5!(cR)$) {$\cdots$};
    \foreach \x in {N_cL, N_cR} \draw[edge] (N_v1) -- (\x);
    
    \draw[brace] (N_cL.north west) -- (N_cR.north east)
        node[brace_label, above=3pt] {$V_{2r+2s+3}$ };

    \coordinate (dL) at ($(v2) + (-\xTopSpread, \yTop)$);
    \coordinate (dR) at ($(v2) + (\xTopSpread, \yTop)$);
    \node[vertex] (N_dL) at (dL) {};
    \node[vertex] (N_dR) at (dR) {};
    \node at ($(dL)!0.5!(dR)$) {$\cdots$};
    \foreach \x in {N_dL, N_dR} \draw[edge] (N_v2) -- (\x);
    
    \draw[brace] (N_dL.north west) -- (N_dR.north east)
        node[brace_label, above=3pt] {$V_{2r+2s+4}$ };

    
    \coordinate (L_outer) at (-\xMidOuter, \yMid);
    \coordinate (L_inner) at (-\xMidInner, \yMid);

    \node[vertex] (N_L_out) at (L_outer) {};
    \node[vertex] (N_L_in) at (L_inner) {};
    \node at ($(L_outer)!0.5!(L_inner)$) {$\cdots$};

    \draw[edge] (N_v1) -- (N_L_out);
    \draw[edge] (N_v1) -- (N_L_in);

    \node[mylabel, above left] at (N_L_out) {$w_1$};
    \node[mylabel, above right] at (N_L_in) {$w_r$};

    \coordinate (R_inner) at (\xMidInner, \yMid);
    \coordinate (R_outer) at (\xMidOuter, \yMid);

    \node[vertex] (N_R_in) at (R_inner) {};
    \node[vertex] (N_R_out) at (R_outer) {};
    \node at ($(R_inner)!0.5!(R_outer)$) {$\cdots$};

    \draw[edge] (N_v2) -- (N_R_in);
    \draw[edge] (N_v2) -- (N_R_out);

    \node[mylabel, above left] at (N_R_in) {$z_1$};
    \node[mylabel, above right] at (N_R_out) {$z_s$};

    \newcommand{\drawLeaves}[2]{
        \coordinate (l_left) at ($(#1) + (-\leafSpread, \yBot-\yMid)$);
        \coordinate (l_right) at ($(#1) + (\leafSpread, \yBot-\yMid)$);
        
        \node[vertex] (n_l) at (l_left) {};
        \node[vertex] (n_r) at (l_right) {};
        
        \draw[edge] (#1) -- (n_l);
        \draw[edge] (#1) -- (n_r);
        
        \node at ($(l_left)!0.5!(l_right)$) {$\cdots$};
        
        \draw[brace_mirror, shorten <=0pt, shorten >=0pt] 
            (n_l.south west) -- (n_r.south east)
            node[brace_label, below=2pt] {#2};
    }

    \drawLeaves{N_L_out}{$V_{1}$ }
    \drawLeaves{N_L_in}{$V_{r}$ }
    \drawLeaves{N_R_in}{$V_{r+1}$ }
    \drawLeaves{N_R_out}{$V_{r+s}$ }

       
\end{tikzpicture}
    \caption[Labeled diameter-5 tree]{ Labeled \texorpdfstring{$T^{p,q}(a_1,\ldots,a_r \paramsep b_1,\ldots,b_s)$}{T^(p,q)(a1,...,ar; b1,...,bs)}.}
    \label{fig:diam5-partition}
\end{figure}

We define the following family of symmetric trees of diameter 5:

\[
\mathcal{T} = \left\{ T_{\delta(\eta-\kappa)}^{\kappa^2-\delta(\eta-\kappa)} \left(\frac{\kappa^3}{\eta}\right) \;\middle|\; \eta \mid \kappa^3, \; 1 \le \delta(\eta-\kappa) \le \kappa^2-1 \right\},
\]
where $\kappa \ge 2$, $\eta \ge 1$, and $\delta \in \{1, -1\}$.

In this section, Theorem~\ref{t2} , \ref{t3} , \ref{t4} completely characterize trees of diameter 5 with exactly three main eigenvalues. The following lemmas are key parts of their proofs.

\begin{lemma}\label{le1}
  Let $\kappa \ge 2$ be an integer and $\delta \in \{1,-1\}$. Consider
  \[
  \begin{cases}
  r + p = \kappa^2,\\[2pt]
  a\bigl(\kappa+\delta r\bigr) = \kappa^3,
  \end{cases}
  \]
  in positive integers $r,a,p$. Then there is a solution if and only if there exists a positive divisor $\eta$ of $\kappa^3$ such that
  \[
  1 \le \delta(\eta-\kappa) \le \kappa^2 - 1,
  \]
  in which case
  \[
  r = \delta(\eta-\kappa),\qquad a = \dfrac{\kappa^3}{\eta},\qquad p = \kappa^2 - \delta(\eta-\kappa).
  \]
  \end{lemma}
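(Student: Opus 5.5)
The plan is to substitute $\eta := \kappa + \delta r$ and show that the two equations become exactly the divisibility and range conditions in the statement. This is elementary number theory: the first equation fixes $p$ once $r$ is known, so everything reduces to the second equation.

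\emph{Necessity.} Assume $(r,a,p)$ is a solution in positive integers. Since $a \ge 1$ and $\kappa^3 > 0$, the second equation forces $\kappa + \delta r > 0$. Hence $\eta = \kappa + \delta r$ is a positive integer, and $a\eta = \kappa^3$ shows $\eta \mid \kappa^3$ with $a = \kappa^3/\eta$. Because $\delta^2 = 1$, we get $r = \delta(\eta - \kappa)$. The first equation gives $p = \kappa^2 - r$. Now $r \ge 1$ and $p \ge 1$ translate to $1 \le \delta(\eta-\kappa) \le \kappa^2 - 1$, and this yields the stated formulas for $r$, $a$, $p$.

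\emph{Sufficiency.} Given a positive divisor $\eta$ of $\kappa^3$ with $1 \le \delta(\eta-\kappa) \le \kappa^2-1$, define $r$, $a$, $p$ by the displayed formulas.
\begin{itemize}
\item $r \ge 1$ and $p = \kappa^2 - r \ge 1$ follow directly from the inequality.
\item $a = \kappa^3/\eta$ is a positive integer because $\eta \mid \kappa^3$ and $\eta > 0$.
\item $r + p = \kappa^2$ holds by construction.
\item $\kappa + \delta r = \kappa + \delta^2(\eta - \kappa) = \eta$, so $a(\kappa + \delta r) = \kappa^3$.
\end{itemize}
Thus $(r,a,p)$ is a solution.

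The only point requiring care is the sign argument in the necessity direction, namely that $\kappa + \delta r$ must be positive. This matters when $\delta = -1$, because it is what allows $\eta$ to be taken as a positive divisor. It follows immediately from $a > 0$ and $\kappa^3 > 0$.
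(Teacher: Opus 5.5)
Your proof is correct and matches the paper's argument essentially verbatim. Both set $\eta=\kappa+\delta r$, read off $a=\kappa^3/\eta$, $r=\delta(\eta-\kappa)$ and $p=\kappa^2-r$, translate $r,p\ge 1$ into the stated range, and verify the converse by direct substitution. Your explicit justification that $\kappa+\delta r>0$ (from $a>0$ and $\kappa^3>0$) is something the paper only asserts parenthetically, so your version is slightly more careful.
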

  
  \begin{proof}
  Since $a(\kappa+\delta r)=\kappa^3$, set $\eta := \kappa+\delta r$ (so $\eta$ is a positive divisor of $\kappa^3$). Then $a = \kappa^3/\eta$ and $r = \delta(\eta-\kappa)$. From $r+p=\kappa^2$ we find $p = \kappa^2 - \delta(\eta-\kappa)$. The constraints $r, p > 0$ translate to $1 \leq \delta(\eta-\kappa) \leq \kappa^2-1$.

  Conversely, if $\eta \mid \kappa^3$ and $1 \leq \delta(\eta-\kappa) \leq \kappa^2-1$, set $a=\kappa^3/\eta$, $r=\delta(\eta-\kappa)$, $p=\kappa^2-\delta(\eta-\kappa)$. Then $a>0$, $r>0$, $p>0$ and both original equations clearly hold.
  \end{proof}

To address the solvability of the polynomial systems appearing in the following lemmas, we utilize the theory of Gr\"obner bases. Let $I$ be an ideal in a polynomial ring $S$, and let $<$ be a monomial order on $S$. A sequence $g_1, \dots, g_m$ of elements in $I$ is called a \emph{Gr\"obner basis} of $I$ with respect to $<$ if the initial ideal $\mathrm{in}_<(I)$ is generated by the leading terms of the sequence, that is, $\mathrm{in}_<(I) = \langle \mathrm{in}_<(g_1), \dots, \mathrm{in}_<(g_m) \rangle$ \cite[Definition 2.5]{EneHerzog2012}.

\begin{remark}\label{rem:gb_generates}
A fundamental consequence of this definition is that if $g_1, \dots, g_m$ is a Gr\"obner basis of $I$ with respect to a monomial order $<$, then the sequence $g_1, \dots, g_m$ generates the ideal $I$ \cite[Theorem 2.8]{EneHerzog2012}. This ensures that the system of equations defined by the Gr\"obner basis shares the same solution set as the original system.
\end{remark}

To compute the Gr\"obner basis explicitly, we employ Buchberger's Algorithm, as detailed in \cite[Chapter 2, Section 7]{CLO2015}. This constructive method iteratively transforms a given generating set into a Gr\"obner basis in a finite number of steps. The correctness of the algorithm relies on Buchberger's Criterion \cite[Theorem 2.14]{EneHerzog2012}. In the proofs that follow, we apply this algorithmic machinery to compute elimination ideals and strictly determine the consistency of the associated systems of Diophantine equations.

\begin{lemma}\label{le2}
    Let $a, b, r, s \in \mathbb{Z}_{>0}$ be positive integers such that $a \neq b$. Define the constant
    \[
        K = 1 + \frac{s-r}{b-a}.
    \]
    Define $L_i$ and $R_i$ for $i \in \{1, 2, 3\}$ as follows:
    \begin{align*}
        L_1 &= a(b^2+b-a+s+sb) - b(a^2+s+ra), & R_1 &= a(b-a+s) - br,  \\
        L_2 &= a(r^2+r+s+ra+sb-a) - r(a^2+s+ra), & R_2 &= a(s+ra+r-a) - r^2,  \\
        L_3 &= a(s^2+2s+ra+sb-a) - s(a^2+s+ra), & R_3 &= a(r+sb+s-a) - sr. 
    \end{align*}
    Then, the system of Diophantine equations $\mathcal{S}$ given by
    \[
        \mathcal{S}: \quad L_i - R_i K = 0 \quad \text{for } i=1, 2, 3
    \]
    has no solutions.
\end{lemma}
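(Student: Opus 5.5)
The plan is to turn $\mathcal{S}$ into a system of polynomial equations, compute a Gr\"obner basis, and use the shape of that basis to rule out positive integer solutions with $a\neq b$.

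First, clear the denominator of $K$. Since $a\neq b$, multiplying each equation by $b-a$ gives the equivalent system
\[
P_i := (b-a)L_i - \bigl(b-a+s-r\bigr)R_i = 0,\qquad i=1,2,3,
\]
in $\mathbb{Z}[a,b,r,s]$. Writing $L_i - R_iK = (L_i-R_i) - R_i(K-1)$ first gives a more transparent form. For example, a direct expansion gives
\[
L_1-R_1 = b\bigl[a(b-a)+(a-1)(s-r)\bigr],
\]
so the first equation reads
\[
b(b-a)\bigl[a(b-a)+(a-1)(s-r)\bigr] = (s-r)\bigl(a(b-a+s)-br\bigr).
\]
I expect $L_2-R_2$ and $L_3-R_3$ to simplify similarly. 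I would then substitute $d=b-a\neq 0$ and $t=s-r$, i.e.\ $b=a+d$ and $s=r+t$. This should lower the degrees and make common factors visible.

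Second, compute a Gr\"obner basis of $I=\langle P_1,P_2,P_3\rangle$ with Buchberger's algorithm, as described before the lemma. I would use a lexicographic order that eliminates $s$ and $b$ first, say $s>b>r>a$, or $t>d>r>a$ in the new variables. By Remark~\ref{rem:gb_generates}, the basis has the same common zero set as the $P_i$. The aim is to find, in the elimination ideals $I\cap\mathbb{Z}[r,a]$ or $I\cap\mathbb{Z}[d,r,a]$, polynomials that factor into pieces whose vanishing is incompatible with the hypotheses. Typical pieces would be $a$, $r$, $b-a$, $a+r$, or something like $a^2+r+1$ that is positive for positive integers. One would then read off that every common zero has $a=b$ or some variable non-positive.

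Third, handle case by case whatever non-sign-definite factors survive. For each such factor, substitute it back into the remaining basis elements, which are triangular in lex order. Each branch should then force $d=0$, a non-positive value of $a,r,s$ or $b$, or an integer constraint that is visibly impossible, for instance a quadratic in $a$ with negative discriminant.

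The main obstacle is this last step. The elimination polynomial may have a factor that admits real positive roots, so positivity alone does not finish the argument and genuine integrality arguments (divisibility or bounding) are needed on that branch. If the lex basis is too large to handle, I would first try simple integer combinations of the $P_i$ to cancel the top-degree terms in $b$ and $s$. The system is nearly symmetric under $a\leftrightarrow b$, $r\leftrightarrow s$, which suggests such combinations exist. I would then run the Gr\"obner computation on the smaller resulting system.
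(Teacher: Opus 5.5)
Your reduction to $P_i=(b-a)(L_i-R_i)-(s-r)R_i$ matches the paper, and so does the idea of eliminating variables with a lex Gr\"obner basis. However, the elimination you actually commit to cannot work.

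The first term of each $P_i$ carries the factor $b-a$ and the second carries $s-r$. So all three $P_i$ vanish identically on the plane $\{b=a,\ s=r\}$. This plane lies in $V(I)$ and maps onto the entire $(r,a)$-plane: given $(r,a)$, take $b=a$ and $s=r$. Hence any $f\in I\cap\mathbb{Q}[r,a]$ vanishes on all of $\mathbb{C}^2$, so $I\cap\mathbb{Q}[r,a]=0$. A lex basis for $s>b>r>a$ therefore contains no polynomial in $r,a$ alone, and there is nothing to factor. Your own list of hoped-for factors shows the problem: $b-a$ cannot divide an element of $\mathbb{Z}[r,a]$.

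The same obstruction rules out eliminating onto $(a,s)$, $(b,r)$ or $(b,s)$. Only projection onto $(a,b)$ or onto $(r,s)$ sends this degenerate plane to a proper curve ($a=b$, resp.\ $r=s$), where it contributes only a harmless factor. Your fallback $I\cap\mathbb{Z}[d,r,a]$ escapes this argument, but every element of it is divisible by $d$. What remains is a three-variable equation you have not analysed, so your second and third steps are a hope rather than an argument.

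The missing idea is to eliminate $r$ and $s$ while keeping both $a$ and $b$; the paper uses the lex order $r\succ s\succ a\succ b$. The basis then contains a single polynomial in $a,b$, namely $a^2b(a-b)^3Q(a,b)$ with $Q(a,b)=ab[(a-1)^2+(b-1)^2]+a^2b^2-a^2-b^2$. Since $a,b>0$ and $a\ne b$, a solution of $\mathcal{S}$ would force $Q(a,b)=0$.

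This is impossible. By symmetry take $a>b$. If $b=1$, then $Q(a,1)=a(a-1)^2-1>0$ for $a\ge 2$. If $b\ge 2$, then $a\ge 3$ and $a^2b^2-a^2-b^2>0$, while the other summand is nonnegative. The difficulty you anticipate, a factor with positive real roots needing integrality arguments, shrinks to this short case split once the right variables are kept. Without that choice and the explicit eliminant, your proposal does not yet contain a proof.
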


\begin{proof}
    Suppose, for the sake of contradiction, that solutions exist. Substituting $K$ into the system yields the polynomial equations:
    \[
        E_i(a,b,r,s) := (b-a)(L_i - R_i) - R_i(s-r) = 0, \quad \text{for } i=1, 2, 3.
    \]

    Let $I = \langle E_1, E_2, E_3 \rangle$ be the ideal generated by these polynomials in the ring $\mathbb{Q}[a,b,r,s]$.  To eliminate the variables \( r \) and \( s \), we compute a Gr\"obner basis of \( I \) with respect to the lexicographic monomial order \( r \succ s \succ a \succ b \) (see the Data Availability Statement for the SageMath code and computational details).  The computation reveals that there is a unique polynomial in the basis depending only on $a$ and $b$, namely
    \[ P(a,b) = a^2 b (a-b)^3 Q(a,b),
    \]
  where 
 \[
        Q(a,b) = ab \left[ (a-1)^2 + (b-1)^2 \right] + (a^2b^2 - a^2 - b^2).
 \]

  Since  $a\neq b$, the existence of a solution to $\mathcal{S}$ implies that $Q(a,b) = 0$.
    
    Since $Q(a,b)$ is symmetric, without loss of generality, assume $a > b \ge 1$.
    If $b=1$, $Q(a,1) = a^3 - 2a^2 + a - 1 = a^2(a-2) + (a-1)$, which is strictly positive for all $a \ge 2$.
    If $b \ge 2$, then $a > 2$, so $a^2b^2 - a^2 - b^2 > 0$, and hence $Q(a,b) > 0$.
    Consequently, $Q(a,b) > 0$ for all  $a, b\in \mathbb{Z}_{>0}$, a contradiction. Thus, no solutions exist.
\end{proof}

\begin{lemma}\label{le3}
    Let $a, b, r, s, p, q \in \mathbb{Z}_{>0}$ be positive integers such that $a \neq b$. Define the constant
    \[
        K = 1 + \frac{s+q-r-p}{b-a}.
    \]
    Define $L_i$ and $R_i$ for $i \in \{1, \dots, 3\}$ as follows:
    \begin{align*}
        L_1 &= ra - a + s + q, & R_1 &= r + p - a, \\
        L_2 &= sb + s + q - a, & R_2 &= s + q - a, \\
        L_3 &= a(b^2+b+sb+s+q-a) - b(a^2+ra+s+q), & R_3 &= a(b+s+q-a) - b(r+p), \\           
    \end{align*}
    Then, the system of Diophantine equations $\mathcal{S}$ given by
    \[
        \mathcal{S}: \quad L_i - R_i K = 0 \quad \text{for } i=1, \dots, 3
    \]
    has no solutions.
\end{lemma}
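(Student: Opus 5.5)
The plan is to clear the denominator, keep $t:=K-1=\frac{S-R}{b-a}$ as an auxiliary unknown, and then eliminate. Throughout I abbreviate $R:=r+p$ and $S:=s+q$, so the system reads $L_i=R_iK$, i.e.\ $L_i-R_i=tR_i$.

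\textbf{Step 1: rewrite the three equations in terms of $t$.} Short computations give the following.

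The second equation becomes $t(S-a)=sb$.

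The first equation becomes $t(R-a)=ra+S-R$.

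For the third equation, expanding gives
\[
L_3-R_3=ab(b-a)+ab(s-r)+b(R-S).
\]
Using $R-S=-(b-a)t$, the third equation becomes $tR_3=ab(b-a)+ab(s-r)-b(b-a)t$. Here $R_3=a(b+S-a)-bR$.

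Together with the defining relation $(b-a)t=S-R$, this gives four polynomial equations in $a,b,r,s,p,q,t$.

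\textbf{Step 2: use the simplest equation to fix signs.} Since $sb>0$, the second equation forces $t\neq 0$, and $t$ has the sign of $S-a$.

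\textbf{Step 3: eliminate.} Subtracting the first equation from the second gives
\[
t(S-R)=sb-ra+R-S,
\]
that is, $(b-a)t^2+(b-a)t=sb-ra$. I then plan to use the first two equations to express $S$ and $R$ rationally in terms of $a,b,r,s,t$:
\[
S=a+\frac{sb}{t}, \qquad R(t+1)=at+ra+S.
\]
Substituting these into $(b-a)t=S-R$ and into the third equation leaves two polynomial relations in $a,b,r,s,t$.

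\textbf{Step 4: obtain a two-variable obstruction.} Following the method of Lemma~\ref{le2}, I would compute a lexicographic Gr\"obner basis of the resulting ideal in $\mathbb{Q}[t,r,s,a,b]$, with $p,q$ already eliminated, using an order that eliminates $t,r,s$ first. The goal is an elimination polynomial in $a$ and $b$ alone, presumably of the form $(a-b)^m\cdot(\text{monomial})\cdot Q(a,b)$. Since $a\neq b$ and $a,b>0$, such a relation would force $Q(a,b)=0$. I would then show that $Q$ has no positive integer zeros by a sign argument, splitting into the cases $b=1$ and $b\ge 2$ as in Lemma~\ref{le2}.

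\textbf{Fallback if elimination leaves a nonzero ideal in more variables.} In that case I would instead exploit positivity directly. From the first two equations, $t(S-a)=sb>0$ and $t(R-a)=ra+S-R$. Combined with $(b-a)t=S-R$, I would split into the cases $t>0$ and $t<0$:
\begin{itemize}
\item If $t>0$, then $S>a$. Together with $t(t+1)(b-a)=sb-ra$, this should contradict the third equation once $R_3$ is bounded using $S>a$.
\item If $t<0$, then $S<a$, and the symmetric argument should apply.
\end{itemize}

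\textbf{Main obstacle.} The hard step is Step 4. Specifically, it is showing that the elimination produces a genuinely constraining polynomial in $a,b$, or, in the fallback, a clean sign contradiction, rather than a positive-dimensional family. If a leftover factor admits positive integer roots, each such root will have to be checked by hand against the positivity of $p$ and $q$, i.e.\ against $R>r$ and $S>s$.
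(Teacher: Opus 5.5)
Your Steps 1--3 are correct, but the argument stops exactly where the contradiction must be produced. Step~4 is only a guess at what a computer elimination would return, and the fallback (``this should contradict the third equation'') is not an argument.

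Moreover, the elimination as you set it up would return nothing. Writing $R=(at+ra+S)/(t+1)$ divides by $t+1$. It therefore discards what the first equation says at $t=-1$, namely $S=a(1-r)$. After clearing denominators, your two relations are $G_1=(b-a)t(t+1)-(sb-ra)$ and, for the third equation in your Step~1 form, $G_2=bG_1-(t+1)ab(b-a)$. The zero set of $\langle G_1,G_2\rangle$ therefore contains $\{t=-1,\ ra=sb\}$. This set projects onto the whole $(a,b)$-plane (take $r=b$, $s=a$). So the elimination ideal in $\mathbb{Q}[a,b]$ is $0$, and the hoped-for polynomial in $a,b$ alone does not exist. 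You would have to exclude $t=-1$ separately or saturate by $t+1$. Excluding it is easy: $t=-1$ means $K=0$, so the first equation reads $a(r-1)+s+q=0$, which is impossible. Your proposal does neither, even though the substitution needs $t\neq-1$.

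The missing idea is that no elimination is needed. Directly from the definitions, $L_3=ab(b-a)+aL_2-bL_1$ and $R_3=aR_2-bR_1$. Hence, for every value of $K$,
\[
L_3-R_3K=ab(b-a)+a\,(L_2-R_2K)-b\,(L_1-R_1K).
\]
So the first two equations force the third to read $ab(b-a)=0$, contradicting $a,b>0$, $a\neq b$.

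In your own notation, the same step reads as follows. We have $tR_3=abt+a\,t(S-a)-b\,tR$. The first equation together with $S-R=(b-a)t$ gives $tR=tb+ra$, so $tR_3=tb(a-b)+ab(s-r)$. Comparing with your Step~1 form of the third equation leaves $ab(b-a)=0$.

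This is precisely what the paper's Gr\"obner computation detects. Multiplying the identity by $b-a$ gives $ab(a-b)^2=E_3-aE_2+bE_1\in I$. So there is no nontrivial factor $Q(a,b)$ and no sign analysis to carry out.
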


\begin{proof}
    Suppose, for the sake of contradiction, that solutions exist. Substituting $K$ into the system yields the polynomial equations:
    \[
        E_i(a,b,r,s,p,q) := (b-a)(L_i - R_i) - R_i(s+q-r-p) = 0, \quad \text{for } i=1, \dots, 3.
    \]
    Let $I = \langle E_1, \dots, E_3 \rangle$ be the ideal generated by these polynomials in the ring $\mathbb{Q}[a,b,r,s,p,q]$. Consider the elimination ideal $I \cap \mathbb{Q}[a,b]$. Using the Buchberger algorithm with the lexicographic ordering $r \succ s \succ p \succ q \succ a \succ b$, we compute the Gr\"obner basis of $I$ using SageMath (see \hyperref[sec:data-availability]{Data Availability Statement}). The computation reveals that there is a unique polynomial in the basis depending only on $a$ and $b$, namely
    \[
        P(a,b) = a b (a-b)^2.
    \]
    Since $a \neq b$, the existence of a solution to $\mathcal{S}$ implies that $P(a,b) = 0$.

    However,  $P(a,b) > 0$ for all $a, b \in \mathbb{Z}_{>0}$ with $a \neq b$, a contradiction. Thus, no solutions exist.
\end{proof}

\begin{lemma}\label{lem:no-solution-3eq}
    Let $a,b,r,s,p \in \mathbb{Z}_{>0}$ be positive integers with $a \neq b$. Set $u := s-a$ and $v := r+p-b$. Define:
    \begin{equation*}
        K = 1 + \frac{s-r-p}{b-a}.
    \end{equation*}
    Assume that $K$ is an integer. Let $L_i, R_i$ for $i \in \{1,2,3\}$ be defined by
    \begin{align*}
        L_1 &= ar + s - a, & R_1 &= r + p - a,\\
        L_2 &= s(ab+a-b) + (b-a)(ab+a) - abr, 
        & R_2 &= as + a(b-a) - b(r + p),\\
        L_3 &= (a-1)s^2 + s(ab-a^2-ar+2a) + a^2(r-1), 
        & R_3 &= s(ab+a-r-p) + a(r + p - a).
    \end{align*}
    Then the system of equations given by $L_i - R_i K = 0$ for $i = 1,2,3$ has no integer solutions.
\end{lemma}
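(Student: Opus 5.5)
I will follow the template of Lemmas~\ref{le2} and~\ref{le3}, adapted to the extra hypothesis that $K$ is an integer. Suppose, for contradiction, that a solution exists. Clearing the denominator in $K$ gives the polynomial equations
\[
E_i(a,b,r,s,p) := (b-a)(L_i - R_i) - R_i(s-r-p) = 0, \qquad i=1,2,3.
\]
Let $I=\langle E_1,E_2,E_3\rangle \subseteq \mathbb{Q}[a,b,r,s,p]$. By Remark~\ref{rem:gb_generates}, any Gr\"obner basis of $I$ has the same common zeros as the system.

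The first reduction is to use the linear structure. $E_1$ involves $r,p,s$ only to low degree, and $E_2$ is also at most quadratic in these variables. I would solve $E_1=0$ for $s$ (or for $p$) and substitute into $E_2$ and $E_3$. This is where the substitutions $u=s-a$ and $v=r+p-b$ should help. With them, $s-r-p=(u-v)-(b-a)$, so $K = (u-v)/(b-a)$ exactly, and the hypothesis $K\in\mathbb{Z}$ says that $(b-a)\mid(u-v)$.

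Writing $K$ as an integer parameter, the equations $L_i=R_iK$ become polynomial in $(a,b,r,s,p,K)$ and nearly linear in $r,s,p$. I would add the relation
\[
(b-a)(K-1) = s-r-p
\]
as a generator and compute a lexicographic Gr\"obner basis with $r\succ p\succ s\succ K\succ a\succ b$, or, if that is cleaner, eliminate down to $\mathbb{Q}[a,b,K]$. I expect a polynomial in the elimination ideal of the form (monomial factors)$\,\cdot\,(a-b)^m\cdot Q$. Here $Q$ depends only on $a,b$, or on $a,b,K$.

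The main obstacle is the final Diophantine step. Unlike Lemma~\ref{le2}, the eliminant $Q$ will likely not be sign-definite on positive integers, which is presumably why integrality of $K$ is assumed.

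My first attempt would be to read $Q(a,b,K)=0$ as a relation in which $(b-a)$, or $a$, must divide some small expression in $K$. Integrality of $K$ would then force a bound, for example $|K|\le 2$, or $a=1$. I would then dispose of these finitely many cases directly. I would back-substitute into $E_1=L_1-R_1K=0$, that is,
\[
ar+s-a = K(r+p-a),
\]
and check positivity of $r,s,p$. For instance, $K\le 0$ contradicts $ar+s-a\ge r+s-a$ together with the remaining equations, and $K=1$ forces $s=r+p$ and then $a=b$.

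If the eliminant is instead sign-definite, as in Lemma~\ref{le3}, the contradiction follows immediately, exactly as there.
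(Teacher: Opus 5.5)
Your proposal leaves the decisive step, namely what the eliminant is and why it has no admissible zeros, to an unperformed Gr\"obner computation. The outcome you anticipate cannot occur, because the system has a large family of solutions that violate only the hypothesis $s>0$. For every $b\ge 3$, the point $(a,b,r,s,p)=\bigl(b^2,\,b,\,(b-1)(b-2),\,0,\,b^3-3b^2+4b-2\bigr)$ gives $K=b\in\mathbb{Z}$ and satisfies $L_i=R_iK$ for $i=1,2,3$; for instance $(9,3,2,0,10)$ with $K=3$. More generally, for any rational $a\notin\{0,b\}$, setting $s=0$ and $K=b+1-b^2/a$ extends to a rational solution.

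It follows that the elimination ideal in $\mathbb{Q}[a,b]$ is zero, so no eliminant in $a,b$ alone exists. Moreover, every element of the elimination ideal in $\mathbb{Q}[a,b,K]$ is divisible by the irreducible polynomial $a(b+1-K)-b^2$. This polynomial vanishes at infinitely many points with $a\neq b$ positive and $K$ integral, e.g.\ $a=b^2$, $K=b$. So integrality of $K$ yields no bound, there is no reduction to finitely many cases, and no Diophantine analysis of the eliminant by itself can give a contradiction. Positivity of $s$ must be used before eliminating, for example by saturating with respect to $s$, and your plan never does this.

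The missing idea is the explicit reduction carried out in the paper. Set $u=s-a$, $v=r+p-b$ and $m=K-1$. The definition of $K$ reads $(b-a)K=u-v$, and subtracting this from $L_1=R_1K$ gives $mv=ar$, so $v\neq 0$ and $m=ar/v$. Substituting $ar=mv$ and $v=u-(m+1)(b-a)$, one finds $L_2-R_2K=a\bigl(b^2-u(m-b)\bigr)$ and $L_3-R_3K=as(u-bm)$. The factor $s$ here is exactly where $s>0$ enters; the paper's ``further simplification'' divides by $as$. Thus $u=bm$ and $u(m-b)=b^2$, whence $m^2=b(m+1)$. Since $\gcd(m,m+1)=1$, the divisibility $m+1\mid m^2$ forces $m\in\{0,-2\}$. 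This gives either $u=0$, contradicting $u(m-b)=b^2>0$, or $b=-4$, contradicting $b>0$.

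A smaller point: your claim that $K=1$ ``forces $s=r+p$ and then $a=b$'' is unjustified. With $s=r+p$ the first equation reduces to $ar=0$, and that is the actual contradiction.
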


\begin{proof}
    Suppose, for the sake of contradiction, that integer solutions exist. We introduce the substitutions $s=a+u$ and $r+p=b+v$.
    The equation $L_1 - R_1 K = 0$, combined with the definition of $K$, implies the identity
    \[
        K = \frac{ar+u}{v+b-a} = 1 + \frac{ar}{v}.
    \]
    Since $K \in \mathbb{Z}$, we define the integer $m = ar/v$, so $K = 1+m$.
    Substituting $K=1+m$ into $L_2 - R_2 K = 0$ simplifies the equation to the constraint $u(m-b) = b^2$, which implies $u \neq 0$. Further simplification of the third equation $L_3 - R_3 K = 0$ gives $u = bm.$
   Substituting this into $u(m-b) = b^2$, we obtain
    \[
        m^2 = b(m+1).
    \]
    It follows that $m+1 \mid m^2$. Since $\gcd(m,m+1)=1$, we must have $m+1 \in \{1,-1\}$.
    If $m+1=1$, then $m=0$, and hence $u=bm=0$, contradicting $u(m-b)=b^2>0$.
    If $m+1=-1$, then $m=-2$, and hence $b=-4$, contradicting $b>0$.
    Thus, no solutions exist.
\end{proof}

\begin{theorem}\label{t2}
The tree $T = T(a_1, \dots, a_r \paramsep b_1, \dots, b_s)$ has exactly three main eigenvalues if and only if $r=s,a_i=b_j=a$, and $T \in T_{r}(a)$.
\end{theorem}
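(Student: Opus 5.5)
We work with the equitable partition $\pi$ of $T=T(a_1,\dots,a_r\paramsep b_1,\dots,b_s)$ from Figure~\ref{fig:diam5-partition} (with $p=q=0$, so the last two cells are empty and discarded) and invoke Corollary~\ref{1c} with $k=3$. The tree has exactly three main eigenvalues if and only if there are unique integers $c_0,c_1,c_2$ with
\[
d_3(v)=c_0d_2(v)+c_1d_1(v)+c_2
\]
at one vertex of each cell. The walk counts are explicit.

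At a leaf $x\in V_k$ of the star centred at $w_k$:
\[
d_1=1,\qquad d_2=a_k+1,\qquad d_3=a_k+1+r+a_k .
\]
At $w_k$:
\[
d_1=a_k+1,\qquad d_2=a_k+r+1,\qquad d_3=a_k(a_k+1)+\textstyle\sum_i (a_i+1)+s+1 .
\]
At $u_1$:
\[
d_1=r+1,\qquad d_2=\textstyle\sum_i(a_i+1)+s+1,
\]
and $d_3$ is the sum of $d_2$ over the neighbours of $u_1$. The cells $V_{r+k}$, $z_k$ and $u_2$ are symmetric.

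\emph{Sufficiency.} For $r=s$ and $a_i=b_j=a$ the automorphism swapping the two halves gives a coarser equitable partition with three cells: leaves, star centres, and $\{u_1,u_2\}$. Thus there are at most three main eigenvalues. The tree is not regular, so there are at least two. Using the tree classification for two main eigenvalues \cite{HouZhou2005}, or directly checking that $\mathbf{j},A\mathbf{j},A^2\mathbf{j}$ are independent (three leaf/centre/root rows with distinct degree patterns), we get exactly three.

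\emph{Necessity, step 1: all $a_i$ are equal.} Subtract the equations at two leaves in stars with $a_i\ne a_j$. Since $d_1=1$ at both, we get $2(a_i-a_j)=c_0(a_i-a_j)$, so $c_0=2$. Next subtract the equations at the corresponding centres $w_i,w_j$. Since $d_3(w)=a(a+1)+\text{const}$, we get
\[
a_i^2+a_i-a_j^2-a_j=(c_0+c_1)(a_i-a_j),
\]
so $a_i+a_j+1=c_0+c_1$. Then a third distinct value, or the leaf equation itself, gives $c_2$. Plugging everything back into the leaf equation $2a+1+r=c_0(a+1)+c_1+c_2$ should force a contradiction. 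If it does not, I would combine the equations at $u_1$ and $u_2$ with the centre equations, which presumably reduces to one of the Diophantine systems of Lemmas~\ref{le2}--\ref{lem:no-solution-3eq}. The same argument applies to the $b_j$. Hence the tree is $T_{r,s}(a,b)$.

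\emph{Necessity, step 2: $a=b$ and $r=s$.} Now the partition has six cells: leaves, centres and roots on each side. The main polynomial of degree $3$ must annihilate $\mathbf{j}$ on this six-dimensional quotient.
\begin{itemize}
\item If $a\neq b$, subtracting the leaf equations on the two sides gives $c_0$ in the form $K=1+\frac{s-r}{b-a}$. The remaining three equations (centres and roots, after eliminating $c_1,c_2$) are exactly the system $L_i-R_iK=0$ of Lemma~\ref{le2}. That lemma says there is no solution, a contradiction.
\item If $a=b$ but $r\neq s$, the leaf, centre and root equations are linear in $c_0,c_1,c_2$. Direct elimination should show that the system forces $r=s$, or forces a linear dependence among $\mathbf{j},A\mathbf{j},A^2\mathbf{j}$, which breaks uniqueness.
\end{itemize}

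The main obstacle is step 1, carrying out the elimination cleanly when several distinct star sizes occur. I would first reduce to the case of at most two distinct values of $a_i$ on each side, and then match the resulting equations to the forms of the preceding lemmas.
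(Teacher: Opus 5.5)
Your overall plan (Corollary~\ref{1c} on $\pi$, then Lemma~\ref{le2} for constant star sizes with $a\neq b$) follows the paper. However, Step~1 rests on a wrong walk count and is never completed.

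For a leaf $x$ of the star at $w_k$ you have $d_3(x)=d_2(w_k)=a_k\cdot 1+d_1(u_1)=a_k+r+1$, not $2a_k+r+1$. So two leaf equations with $a_i\neq a_j$ give $c_0=1$, not $c_0=2$. Your Step~2 implicitly uses the correct count, since otherwise you would not obtain $K=1+\frac{s-r}{b-a}$; the two steps are therefore working with different systems.

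Beyond this, Step~1 ends with ``should force a contradiction'' and ``presumably reduces to'' one of Lemmas~\ref{le2}--\ref{lem:no-solution-3eq}. Those lemmas concern constant star sizes with $a\neq b$, not this case, so no argument is actually given. The missing argument is short once $c_0=1$ is known:
\begin{itemize}
\item The leaf equations on the two sides give $c_1+c_2=r$ and $c_1+c_2=s$, so $r=s$.
\item Subtracting each leaf equation from the corresponding centre equation gives $a_ic_1=a_i^2+\sigma_a$ for all $i$. Hence any two distinct values satisfy $a_ia_j=\sigma_a$, only two values $a_1<a_t$ occur, and $c_1=a_1+a_t$.
\item The equation at $u_2$ reduces to $rc_1=r^2+\sigma_a$, whence $(r-a_1)(r-a_t)=0$.
\item Let $m$ be the number of indices with $a_i=a_1$, so $1\le m\le r-1$. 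The identity $a_1a_t=\sigma_a=ma_1+(r-m)a_t$ forces $a_1=a_t$ whether $r=a_1$ or $r=a_t$, a contradiction.
\end{itemize}
This is the content of the paper's Case~2.

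The sufficiency direction also has a gap. The three-cell quotient does bound the number of main eigenvalues by three. But excluding two is exactly the claim that, on that quotient,
\[
\det\bigl[\mathbf{j},A\mathbf{j},A^2\mathbf{j}\bigr]=\det\begin{pmatrix}1&1&a+1\\1&a+1&a+r+1\\1&r+1&ra+2r+1\end{pmatrix}=ra^2+2ra-a^2-r^2
\]
is nonzero. ``Distinct degree patterns'' does not prove this, because distinct rows do not make a matrix invertible. This is the one nontrivial point of sufficiency.

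The paper settles it by showing that $(r-1)a^2=r(r-2a)$ has no positive integer solutions. Write $a=gx$, $r=gy$ with $g=\gcd(a,r)$. Then $y\mid (gy-1)x^2$ forces $y\mid x^2$, hence $y=1$, and then $(g-1)x^2=1-2x<0$ is impossible.

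Appealing to the Hou--Zhou classification instead would require quoting a specific result from it that rules out $T_r(a)$, which you do not give. By contrast, the subcase $a=b$, $r\neq s$ in your Step~2 is immediate: the two leaf equations then differ only by $s-r$ on the right-hand side.
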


\begin{proof}
Let $T=T(a_1, \dots, a_r \paramsep b_1, \dots, b_s)$, where $r,s\ge 1$, $a_1\le a_2\le \ldots \le a_r$, and $b_1\le b_2\le \ldots \le b_s$. Then $T$ has the equitable $(2r+2s+2)$-partition $\pi$ defined above. By Corollary \ref{1c}, $T$ has exactly three main eigenvalues if and only if  the following linear system has a unique solution:
\begin{equation} \tag{1} \label{sys1}
\left\{
\begin{aligned}
& c_2 + c_1 + (a_i+1)c_0 = a_i + r + 1, && 1 \le i \le r; \\
& c_2 + c_1 + (b_j+1)c_0 = b_j + s + 1, && 1 \le j \le s; \\
& c_2 + (a_i+1)c_1 + (a_i+r+1)c_0 = a_i^2 + a_i + \sigma_a + r + s + 1, && 1 \le i \le r; \\
& c_2 + (b_j+1)c_1 + (b_j+s+1)c_0 = b_j^2 + b_j + \sigma_b + r + s + 1, && 1 \le j \le s; \\
& c_2 + (r+1)c_1 + (\sigma_a + r + s + 1)c_0 = \sigma_a + \sigma_b + (r+1)^2 + s, \\
& c_2 + (s+1)c_1 + (\sigma_b + r + s + 1)c_0 = \sigma_a + \sigma_b + (s+1)^2 + r.
\end{aligned}
\right.
\end{equation}
 where $\sigma_a = \sum_{i=1}^r a_i$ ,
$\sigma_b = \sum_{j=1}^s b_j.$  The uniqueness of the solution can be verified computationally using SageMath (see \hyperref[sec:data-availability]{Data Availability Statement}).

\noindent\textit{Case 1.}
Assume $a_i = a$ for $1 \le i \le r$ and $b_j = b$ for $1 \le j \le s$.
Then $\sigma_a = ra$ and $\sigma_b = sb$, and system \eqref{sys1} is equivalent to
\begin{equation}\tag{2}\label{sys2}
\left\{
\begin{aligned}
& c_2 + c_1 + (a+1)c_0 = a + r + 1, \\[2pt]
& (b-a)c_0 = b - a + s - r, \\[2pt]
& a c_1 + r c_0 = a^2 + a r + s, \\[2pt]
& (a - b)c_1 + (r - s)c_0 = (a - b)(a + b) + a r - b s - (r - s), \\[2pt]
& (r - s)c_1 + (a r - b s)c_0 = (r - s)(r + s + 1),\\[2pt]
& c_2 + (r+1)c_1 + (ar + r + s + 1)c_0 = a r + b s + (r+1)^2 + s.
\end{aligned}
\right.
\end{equation}

\medskip
\noindent\textit{Subcase 1.1.} $a=b$.

From the second equation of \eqref{sys2} we get
\[
0 = (b-a)c_0 = b-a+s-r,
\]
hence $r=s$. Substituting $b=a$ and $s=r$ into \eqref{sys2} and eliminating $c_2$ yields the equivalent system
\begin{equation}\tag{2$'$}\label{sys2'}
\left\{
\begin{aligned}
& c_2 + c_1 + (a+1)c_0 = a + r + 1, \\[2pt]
& a c_1 + r c_0 = a^2 + a r + r, \\[2pt]
& r c_1 + (a r - a + 2r)c_0 = 2 a r + r^2 + 2r - a.
\end{aligned}
\right.
\end{equation}

Let $(c_2,c_1,c_0)^\top$ be the unknown vector and denote by $\Delta$ the determinant of the coefficient matrix of \eqref{sys2'}. A direct calculation gives
$
\Delta =r a^2 + 2ra - a^2 - r^2.
$
Thus system \eqref{sys1} has a unique solution in this subcase if and only if $\Delta \ne 0$.

Assume, for a contradiction, that $(r-1)a^2 = r(r-2a)$ holds for some $a,r \in \mathbb{Z}_{>0}$.
Let $g = \gcd(a,r)$ and write $a = gx$, $r = gy$ with $x,y \in \mathbb{Z}_{>0}$ and $\gcd(x,y)=1$.
Substituting into the equation and dividing by $g^2$ gives
\[
(gy-1)x^2 = y(y-2x).
\]
Hence $y \mid (gy-1)x^2$. Since $\gcd(y,gy-1)=1$, it follows that $y \mid x^2$.
Together with $\gcd(x,y)=1$ this implies $y=1$, and thus
\[
(g-1)x^2 = 1 - 2x.
\]
But for $g \ge 1$ and $x \ge 1$ we have $(g-1)x^2 \ge 0$ and $1-2x \le -1$, a contradiction.
Therefore the equation $(r-1)a^2 = r(r-2a)$ has no solution in positive integers $a,r$.

\noindent{\textit{Subcase 1.2.}} Let $a\ne b$.
In this case, the system \eqref{sys2} is equivalent to

\begin{equation}\tag{3}
\left\{
\begin{aligned}
&  c_2 + c_1 + (a+1)c_0 = a + r + 1, \\[2pt]
& a c_1 + r c_0 = a^2 + s + r a, \\[2pt]
& (b - a)c_0 = b - a + s - r, \\[2pt]
& [ a(b-a+s) - br ] c_0 =  a(b^2+b-a+s+sb) - b(a^2+s+ra)  , \\[2pt]
& [ a(s+ra+r-a) - r^2 ] c_0 =  a(r^2+r+s+ra+sb-a) - r(a^2+s+ra) , \\[2pt]
& [ a(r+sb+s-a) - sr ] c_0 =  a(s^2+2s+ra+sb-a) - s(a^2+s+ra)  .
\end{aligned}
\right.
\end{equation}
By eliminating $c_0$, the linear system \eqref{sys1} admits a unique solution if and only if the system is consistent, which holds if and only if the following Diophantine system has a solution:
\begin{equation} \tag{3$'$}\label{sys3'}
\left\{
\begin{aligned}
& [ a(b^2+b-a+s+sb) - b(a^2+s+ra) ] - [ a(b-a+s) - br ] K = 0, \\[2pt]
& [ a(r^2+r+s+ra+sb-a) - r(a^2+s+ra) ] - [ a(s+ra+r-a) - r^2 ] K = 0, \\[2pt]
& [ a(s^2+2s+ra+sb-a) - s(a^2+s+ra) ] - [ a(r+sb+s-a) - sr ] K = 0.
\end{aligned}
\right.
\end{equation}
where
\[
K = \frac{b - a + s - r}{b - a}.
\]

\par
By Lemma \ref{le2}, the system \eqref{sys3'} has no solution. Therefore, in this case,
the linear system  \eqref{sys1} has no solution.

\noindent{\textit{Case 2.}} Let $t = \min\{i; (a_1 < a_i) \wedge (2 \le i \le r)\}$.

By eliminating $c_2$ , $c_1$ and $c_0,$ the system \eqref{sys1} is equivalent to
\begin{equation}\tag{4}\label{sys4}
\left\{
\begin{aligned}
& a_1c_2 = a_1 r - s - \sigma_a - a_1^2 + r, \\[2pt]
& a_1c_1 = a_1^2 + s + \sigma_a - r, \\[2pt]
& c_0 = 1, \\[2pt]
& s - r= 0, \\[2pt]
&(a_k - a_1) [ a_1 a_k - s - \sigma_a + r ]= 0,&& k=t, \dots, r, \\[2pt]
&a_1 b_j^2 + a_1 \sigma_b - b_j ( a_1^2 + s + \sigma_a - r )= 0, && j=1, \dots, s, \\[2pt]
& r^2(a_1 + 1) - r(s + \sigma_a + a_1^2) + a_1 \sigma_b= 0, \\[2pt]
& s^2(a_1 - 1) + s(a_1 - a_1^2 - \sigma_a + r) + a_1(\sigma_a - r)= 0.
\end{aligned}
\right.
\end{equation}

The linear system \eqref{sys1} admits a unique solution if and only if the system \eqref{sys4} is consistent. Observing that the fourth equation in \eqref{sys4} implies $s=r$, we substitute $s=r$ into the remaining equations. Consequently, the condition holds if and only if the following system has a solution:
\begin{equation}\tag{4$'$}\label{sys4'}
\left\{
\begin{aligned}
& \sigma_a = a_1 a_i,  && i=t, \dots, r, \\[3pt]
& a_1 b_j^2 - b_j(a_1^2 + \sigma_a) + a_1 \sigma_b = 0, && j=1, \dots, s, \\[3pt]
& r a_1 (r - a_1) - (r \sigma_a - a_1 \sigma_b) = 0, \\[3pt]
& (r a_1 - \sigma_a)(r - a_1) = 0.
\end{aligned}
\right.
\end{equation}

 The first equation implies $a_k=a_t$ for $k=t, \dots, r.$ Thus $\sigma_a = (t-1)a_1 + (r-t+1)a_t$ with $a_t > a_1$. The last equation implies $\sigma_a = r a_1$ or $r = a_1$.
  
If $\sigma_a = r a_1$, the first equation gives $r a_1 = a_1 a_t, $ implying $ r = a_t$. Substituting into the expression for $\sigma_a$ yields $r a_1 = (t-1)a_1 + (r-t+1)r$, which simplifies to $(r-t+1)(r-a_1) = 0$. Since $t \le r$, this implies $r=a_1$, contradicting $a_t > a_1$.

If $r = a_1$, the first equation gives $\sigma_a = r a_t$. Substituting into the expression for $\sigma_a$ yields $r a_t = (t-1)r + (r-t+1)a_t$, which simplifies to $(t-1)(r-a_t) = 0$. Since $t \ge 2$, this implies $r=a_t$, again contradicting $a_t > a_1$.
  
Therefore, the system \eqref{sys4'} has no solution. 
By symmetry, the same conclusion holds with the roles of $(a_i,r)$ and $(b_j,s)$ interchanged. In particular, if there exists $t\in\{2,\dots,s\}$ such that $b_t>b_1$, then the linear system \eqref{sys1} cannot have a unique solution.
\end{proof}

\begin{theorem}\label{t3}
Let $p,q \ge 1$. The tree $T = T^{p,q}(a_1, \ldots, a_r \paramsep b_1, \ldots, b_s)$ has exactly three main eigenvalues if and only if $T \in \mathcal{T}$.
\end{theorem}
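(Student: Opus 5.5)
The plan follows the proof of Theorem~\ref{t2}. We use the equitable partition $\pi$ of $T=T^{p,q}(a_1,\ldots,a_r\paramsep b_1,\ldots,b_s)$, now with the two extra cells $V_{2r+2s+3}$ and $V_{2r+2s+4}$. By Corollary~\ref{1c}, $T$ has exactly three main eigenvalues if and only if the relation $d_3=c_0d_2+c_1d_1+c_2$ has a unique solution in $(c_2,c_1,c_0)$ when imposed on one representative of each cell.

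\textbf{Step 1: the linear system.} For the cells we compute $d_1,d_2,d_3$:
\begin{itemize}
\item leaves of stars: $d_1=1$, $d_2=a_i+1$;
\item star centres: $d_1=a_i+1$, $d_2=a_i+r+p+1$;
\item $u_1$: $d_1=r+p+1$, $d_2=\sigma_a+r+p+s+q+1$;
\item the $p$ pendants at $u_1$: $d_1=1$, $d_2=r+p+1$;
\item the cells on the $u_2$ side symmetrically.
\end{itemize}
Then $d_3$ is obtained as the sum of $d_2$ over neighbours. This gives a system analogous to \eqref{sys1}. The new rows for the pendants of $u_1$ and of $u_2$ are
\[
c_2+c_1+(r+p+1)c_0=\sigma_a+r+p+s+q+1,
\]
together with its symmetric counterpart.

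\textbf{Step 2: all $a_i$ equal and all $b_j$ equal.} We first reduce to this case. If some $a_t>a_1$, subtracting the leaf rows for $a_1$ and $a_t$ gives $c_0=1$, exactly as in Case~2 of Theorem~\ref{t2}. The pendant row minus the leaf row then gives $(r+p-a_i)c_0=\sigma_a+s+q-a_i$, so $c_0=1$ forces $\sigma_a+s+q=r+p$. Combined with the centre rows, this should give a contradiction by the same divisibility and sign argument as before.

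\textbf{Step 3: the case $a\neq b$.} Here we eliminate $c_2,c_1$ and express $c_0=K$ from the difference of the two leaf rows, so $K=1+\frac{s+q-r-p}{b-a}$. The remaining consistency conditions are exactly of the form $L_i-R_iK=0$, and Lemma~\ref{le3} rules them out. I expect Lemma~\ref{lem:no-solution-3eq} to be needed for a degenerate reduction, for instance after using one pendant relation to eliminate $q$.

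\textbf{Step 4: the case $a=b$.} The leaf rows then force $c_0=1$ unless the coefficient vanishes. Taking differences of the pendant rows, the $u_1$ and $u_2$ rows, and the centre rows should give $r+p=s+q$, $\sigma_a=\sigma_b$, hence $r=s$ and $p=q$, so $T=T_r^p(a)$. Substituting back and eliminating, I expect the consistency conditions to collapse to
\[
r+p=\kappa^2 \quad\text{and}\quad a(\kappa+\delta r)=\kappa^3,
\]
where $\kappa^2=r+p$ arises because the solution requires the degree of $u_1$ minus one to be a perfect square (from $c_1$ and $c_2$ being integers, using Theorem~\ref{t1}). The sign $\delta$ comes from the two roots of a quadratic. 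We must also check that the determinant of the remaining $3\times3$ system is nonzero, which gives uniqueness. Lemma~\ref{le1} then converts these conditions into membership in $\mathcal T$. For the converse we plug the parameters in and verify the system directly.

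\textbf{Main obstacle.} I expect the main difficulty to be Step~4: extracting exactly the square condition $r+p=\kappa^2$ together with the cubic relation from the eliminated system. This requires showing that the resulting quadratic in the unknown has an integer root only in that situation, and tracking which sign $\delta$ occurs. My first attempt would be to factor the discriminant of this quadratic.
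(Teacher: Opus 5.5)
Your Steps 1--3 follow the paper's proof: the same partition and Corollary~\ref{1c}, $c_0=1$ when the $a_i$ are not all equal, and $c_0=K$ with Lemma~\ref{le3} when $a\neq b$. The gap is Step~4, which carries the whole ``if and only if''. There you only state the expected outcome, and the mechanism you give for the square condition is not the right one. Integrality of $c_1,c_2$ yields only congruences modulo $a$, and integrality is automatic anyway once the solution is unique (Lemma~\ref{le2.7}). The square is a \emph{consistency} condition on $c_0$, and this is the idea you are missing.

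Here is the missing computation. Put $r=s$, $p=q$, $\rho=r+p$ and $m=c_0-1$. The pendant row minus the leaf row gives $(\rho-a)m=ar$, so $\rho\neq a$ and $m\neq 0$. Eliminating $c_1$ with the centre row turns the $u_1$ row into
\[
\bigl(a^2(r-1)+2a\rho-\rho^2\bigr)c_0=a^2r+(a-\rho)\bigl(\rho-a(p+1)\bigr).
\]
Substituting $ar=(\rho-a)m$ makes both sides multiples of $\rho-a$. Dividing out gives $(am-\rho+a)(1+m)=am-\rho+a+ap$, i.e.\ $(am-\rho+a)m=ap$, i.e.\ $a(m^2-\rho)=0$. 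So consistency is exactly $(c_0-1)^2=r+p$, which is the paper's equation $\rho(\rho-a)^2=(ra)^2$.

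Since $m=ar/(\rho-a)$ is rational and $\rho$ is an integer, $\rho=\kappa^2$ with $\kappa\ge 2$ and $m=\delta\kappa$. Substituting into $(\rho-a)m=ar$ gives $a(\kappa+\delta r)=\kappa^3$, and Lemma~\ref{le1} finishes. The converse runs this chain backwards; the $u_2$-side rows coincide with the $u_1$-side ones. Uniqueness is automatic, because $c_0$ is fixed by a row with coefficient $\rho-a\neq 0$ and $c_1,c_2$ are then fixed by the centre and leaf rows. Your ``quadratic with two roots'' is just $m^2=\rho$, so there is no discriminant to factor.

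Smaller points:
\begin{itemize}
\item In Step~2 no divisibility or sign argument is needed. With $c_0=1$ and $\sigma_a+s+q=r+p$, the centre row minus the leaf row reads $a_ic_1=a_i^2$. So $c_1=a_i$ for every $i$, contradicting $a_t>a_1$. The paper instead compares with the $u_2$-side rows to get $\sigma_b=0$.
\item Lemma~\ref{lem:no-solution-3eq} plays no role here; it is the $q=0$ analogue used for Theorem~\ref{t4}. Lemma~\ref{le3} alone disposes of $a\neq b$.
\item When $a=b$, the leaf rows do not force $c_0=1$. The coefficient $b-a$ vanishes, so they give $r+p=s+q$. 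The two pendant rows then give $a(r-s)=0$, hence $r=s$ and $p=q$.
\end{itemize}
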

\begin{proof}
Let $T = T^{p,q}(a_1, \ldots, a_r \paramsep b_1, \ldots, b_s)$. Then $T$ has an equitable $(2r+2s+4)$-partition $\pi$ defined above. By Corollary \ref{1c}, $T$ has exactly three main eigenvalues if and only if the linear system \eqref{sys5} below has a unique solution:
{\small
\begin{equation}
\tag{5}\label{sys5}
\makebox[0pt][l]{\hspace*{-.5\linewidth}$\displaystyle
\left\{
\begin{aligned}
    & \begin{aligned}
        & c_2 + c_1 + (a_i+1)c_0 = a_i+r+p+1, && 1 \le i \le r; \\[2pt]
        & c_2 + c_1 + (b_j+1)c_0 = b_j+s+q+1, && 1 \le j \le s; \\[2pt]
        & c_2 + c_1 + (r+p+1)c_0 = \sigma_a+r+s+p+q+1, \\[2pt]
        & c_2 + c_1 + (s+q+1)c_0 = \sigma_b+r+s+p+q+1, \\[2pt]
        & c_2 + (a_i+1)c_1 + (a_i+r+p+1)c_0 = a_i^2+a_i+\sigma_a+r+s+p+q+1, && 1 \le i \le r; \\[2pt]
        & c_2 + (b_j+1)c_1 + (b_j+s+q+1)c_0 = b_j^2+b_j+\sigma_b+r+s+p+q+1, && 1 \le j \le s;
    \end{aligned} \\[2pt]
    & c_2 + (r+p+1)c_1 + (\sigma_a+r+s+p+q+1)c_0 = \sigma_a+\sigma_b+(r+p)(r+p+1)+r+s+p+q+1, \\[2pt]
    & c_2 + (s+q+1)c_1 + (\sigma_b+r+s+p+q+1)c_0 = \sigma_a+\sigma_b+(s+q)(s+q+1)+r+s+p+q+1.
\end{aligned}
\right.
$}
\end{equation}
}
 where $\sigma_a = \sum_{i=1}^r a_i$ ,
$\sigma_b = \sum_{j=1}^s b_j.$

\noindent{\textit{Case 1.}} Let $a_i=a  $ for $ 1\le i\le r $  and $b_j=b  $ for $ 1\le j\le s .$ Then $\sigma_a = ra$ and $\sigma_b = sb$, and system \eqref{sys5} is equivalent to
\begin{equation}\tag{6}\label{sys6}
\makebox[0pt][l]{\hspace*{-.5\linewidth}$\displaystyle
\left\{
\begin{aligned}
& a\,c_2+\bigl(a^2+a-r-p\bigr)c_0 \;=\; a(p+1)-(s+q),\\[2pt]
& a\,c_1+(r+p)c_0 \;=\; a^2+ar+(s+q),\\[2pt]
& (b-a)c_0 \;=\; b-a+s+q-r-p,\\[2pt]
& (r+p-a)c_0 \;=\; a(r-1)+(s+q),\\[2pt]
& (s+q-a)c_0 \;=\; s(b+1)+q-a,\\[2pt]
& \bigl(ab-a^2+a(s+q)-b(r+p)\bigr)c_0
\;=\;ab\bigl(b+s-r+1\bigr)-a^2(b+1)+(a-b)(s+q),\\[2pt]
& \bigl(a^2(r-1)+a(r+p+s+q)-(r+p)^2\bigr)c_0
\;=\;abs+(a-r-p)\bigl[s+q-a(p+1)\bigr],\\[2pt]
& \bigl(abs-a^2+a(r+p)+(a-r-p)(s+q)\bigr)c_0
\;=\;abs+(a-s-q)\bigl[ar-(a-1)(s+q)\bigr]-a^2+a(s+q).
\end{aligned}
\right.
$}
\end{equation}

\noindent{\textit{Subcase 1.1.}} $a=b.$

Then the third equation of \eqref{sys6} gives $s+q=r+p$.
Substituting this into the sixth equation yields $a^2(s-r)=0$. Since $a>0$, we have $s=r$, hence $p=q$.
Substituting $b=a$ and $s=r$ into \eqref{sys6} yields the equivalent system
\begin{equation}\tag{6$'$}\label{sys6'}
\left\{
\begin{aligned}
& a\,c_2+\bigl(a^2+a-r-p\bigr)c_0 \;=\; ap+a-r-p,\\[2pt]
& a\,c_1+(r+p)c_0 \;=\; a^2+ar+r+p,\\[2pt]
& (r+p-a)c_0 \;=\; ar+r+p-a,\\[2pt]
& \bigl(a^2(r-1)+2a(r+p)-(r+p)^2\bigr)c_0
\;=\; a^2r+(a-r-p)\bigl[r+p-a(p+1)\bigr].\\
\end{aligned}
\right.
\end{equation}

From the third equation of \eqref{sys6'} we have $r+p\neq a$; otherwise it would force $ar=0$. Let $\rho=r+p$. Eliminating $c_0$ from the last equations of \eqref{sys6'} yields the equivalent reduced system
\begin{equation}\tag{6$''$}\label{sys6''}
\left\{
\begin{aligned}
& a\,c_2+\bigl(a^2+a-\rho\bigr)c_0 \;=\; ap+a-\rho,\\[2pt]
& a\,c_1+\rho\,c_0 \;=\; a^2+ar+\rho,\\[2pt]
& (\rho-a)c_0 \;=\; ar+\rho-a,\\[2pt]
& L_1-L_2 \;=\;0.
\end{aligned}
\right.
\end{equation}
\begin{align*}
\text{where }\quad 
L_1&=(\rho-a)\Bigl[a^2r+(a-\rho)\bigl(\rho-a(p+1)\bigr)\Bigr],\\
L_2&=\Bigl[a^2(r-1)+2a\rho-\rho^2\Bigr]\bigl(ar+\rho-a\bigr).
\end{align*}

System \eqref{sys5} admits a unique solution if and only if system \eqref{sys6''} is consistent, i.e., $L_1 = L_2$.
Let $X := \rho-a$ and $N := ra^2 - X^2$. Then $L_1$ and $L_2$ become
\[
    L_1 = N + apX, \qquad L_2 = \frac{N}{X}(ra+X).
\]
Equating $L_1 = L_2$ yields $apX^2 = raN.$ Substituting $N = ra^2 - X^2$ yields $pX^2 = r(ra^2 - X^2)$, which simplifies to $(r+p)X^2 = (ra)^2$. Since $\rho=r+p$, we obtain the Diophantine equation
\begin{equation}\label{eq:k_sq}
    \rho(\rho-a)^2 = (ra)^2.
\end{equation}
Equation \eqref{eq:k_sq} implies that $\rho$ is a perfect square. Let $\rho = w^2$ for some integer $w > 0$. Extracting the square root gives $w |w^2 - a| = ra$. Since $ra > 0$, we have $w^2 \neq a$; thus there exists $\delta\in\{1,-1\}$ such that
\[
|w^2-a|=\delta(w^2-a),
\]
which implies
\[
 w^3=a(w+\delta r).
\]
Combined with $r+p=w^2$, the uniqueness condition is equivalent to the existence of $\delta\in\{1,-1\}$ such that
\[
\begin{cases}
r+p=w^2,\\
a(w+\delta r)=w^3,
\end{cases}
\]
where necessarily $w+\delta r>0$. The solutions to this system are characterized in Lemma~\ref{le1}.

\noindent{\textit{Subcase 1.2.}} Assume that $a\neq b$, and set
\[
K:=1+\frac{s+q-r-p}{\,b-a\,}.
\]
Using the third equation of \eqref{sys6}, eliminating $c_0$ from the fourth to the last equations yields the equivalent system
{\small
\begin{equation}\tag{7}\label{sys7}
\makebox[0pt][l]{\hspace*{-.5\linewidth}$\displaystyle
\left\{
\begin{aligned}
& a\,c_2+\bigl(a^2+a-r-p\bigr)c_0 \;=\; a(p+1)-(s+q),\\[2pt]
& a\,c_1+(r+p)c_0 \;=\; a^2+ar+(s+q),\\[2pt]
& (b-a)c_0 \;=\; b-a+s+q-r-p,\\[2pt]
& \bigl[ra-a+s+q\bigr]-\bigl(r+p-a\bigr)K \;=\; 0,\\[2pt]
& \bigl[sb+s+q-a\bigr]-\bigl(s+q-a\bigr)K \;=\; 0,\\[2pt]
& \Bigl[ab\bigl(b+s-r+1\bigr)-a^2(b+1)+(a-b)(s+q)\Bigr]
-\bigl[ab-a^2+a(s+q)-b(r+p)\bigr]K \;=\; 0,\\[2pt]
& \Bigl[abs+(a-r-p)\bigl(s+q-a(p+1)\bigr)\Bigr]
-\bigl[a^2(r-1)+a(r+p+s+q)-(r+p)^2\bigr]K \;=\; 0,\\[2pt]
& \Bigl[abs+(a-s-q)\bigl[ar-(a-1)(s+q)\bigr]-a^2+a(s+q)\Bigr]
-\bigl[abs-a^2+a(r+p)+(a-r-p)(s+q)\bigr]K \;=\; 0.
\end{aligned}
\right.
$}
\end{equation}
}
In this case, the linear system \eqref{sys5} admits a unique solution if and only if the reduced system \eqref{sys7} is consistent. However, by Lemma~\ref{le3}, the subsystem formed by the fourth, fifth, and sixth equations in \eqref{sys7} has no solution. Consequently, system \eqref{sys7} is inconsistent, implying that the linear system \eqref{sys5} has no solution.

\noindent{\textit{Case 2.}} Let $t = \min\{i; (a_1 < a_i) \wedge (2 \le i \le r)\}$.

By eliminating $c_2,c_1$ and $c_0$ the system \eqref{sys5} is equivalent to
\begin{equation}\tag{8}\label{sys8}
\makebox[0pt][l]{\hspace*{-.5\linewidth}$\displaystyle
\left\{
\begin{aligned}
& a_1c_2+\bigl(a_1^2+a_1-r-p\bigr)c_0
   \;=\; a_1(r+p+1)-\sigma_a-s-q, \\[2pt]
& a_1c_1+(r+p)c_0
   \;=\; a_1^2+\sigma_a+s+q, \\[2pt]
& c_0 \;=\; 1,  \\[2pt]
& s+q-r-p \;=\; 0, \\[2pt]
& \sigma_a+s+q-r-p \;=\; 0, \\[2pt]
& \sigma_b \;=\; 0, \\[2pt]
& a_1a_i-\sigma_a-s-q+r+p \;=\; 0, && t\le i\le r, \\[2pt]
& a_1b_j^2+a_1\sigma_b-b_j\bigl(a_1^2+\sigma_a+s+q-r-p\bigr) \;=\; 0,
&& 1\le j\le s, \\[2pt]
& (r+p)^2(a_1+1)+a_1\sigma_b-(r+p)\bigl(a_1^2+\sigma_a+s+q\bigr)\;=\;0, \\[2pt]
& (a_1-1)(s+q)^2-a_1^2(s+q)+(a_1-s-q)\sigma_a+(r+p)(s+q-a_1)+a_1(s+q)\;=\;0.
\end{aligned}
\right.
$}
\end{equation}
However, the sixth equation of the system \eqref{sys8} forces $\sigma_b = 0$, which contradicts $b_j>0$. Hence, in this case the system \eqref{sys5} admits no solution.

By symmetry, if there exists an index $t \in \{2, \dots, s\}$ such that $b_t > b_1$, then the linear system \eqref{sys5} does not admit a unique solution either.
\end{proof}

\begin{theorem}\label{t4}
For $p \ge 1$, the tree $T = T^{p,0}(a_1, \ldots, a_r \paramsep b_1, \ldots, b_s)$ does not have exactly three main eigenvalues.
\end{theorem}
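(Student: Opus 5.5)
Following the pattern of Theorems~\ref{t2} and~\ref{t3}, I would use the equitable partition $\pi$ with the cell $V_{2r+2s+4}$ removed (since $q=0$), giving $2r+2s+3$ cells. By Corollary~\ref{1c}, $T$ has exactly three main eigenvalues if and only if the linear system in $c_2,c_1,c_0$ obtained by setting $q=0$ in \eqref{sys5} has a unique solution. There are two changes from \eqref{sys5}. First, the equation for the cell $V_{2r+2s+4}$ is deleted. Second, the equation for $u_2$ keeps the form $c_2+(s+1)c_1+(\sigma_b+r+s+p+1)c_0=\sigma_a+\sigma_b+s(s+1)+r+s+p+1$; in other words, it is the $q=0$ specialization.

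\textbf{Case 1: no repeated-star asymmetry.} Here $a_i=a$ and $b_j=b$.
\begin{itemize}
\item Take the equations of the $a$-leaves, the $b$-leaves and the $p$ pendant vertices. The $V_{2r+2s+3}$ equation minus the $a$-leaf equation gives $(r+p-a)c_0=ra-a+s$. The $b$-leaf minus $a$-leaf equation gives $(b-a)c_0=b-a+s-r-p$.
\item \emph{Subcase $a=b$.} The second relation forces $s=r+p$. Substituting this into the analogue of the sixth equation of \eqref{sys6} (the $a$-centre versus $b$-centre comparison) should give a relation such as $a^2(s-r)=\dots$ that forces $p=0$, or produces a contradiction directly. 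I would redo that elimination with $q=0$ and expect a term like $a\,p$ or $a^2 p$ that must vanish, which contradicts $p\ge1$.
\item \emph{Subcase $a\neq b$.} Set $K=1+\frac{s-r-p}{b-a}$. Eliminating $c_0$ as in \eqref{sys7} gives exactly the system of Lemma~\ref{lem:no-solution-3eq}. The $L_1,R_1$ there match the $p$-pendant equation: $ar+s-a$ versus $r+p-a$. The other two equations come from the $a$-centre and $b$-centre equations after substituting $c_1$ from the $a$-centre/leaf difference. The integrality of $K$ follows because $c_0=K$ must be an integer by Theorem~\ref{t1}, or simply because the $c_i$ are integers. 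Lemma~\ref{lem:no-solution-3eq} then rules this subcase out.
\end{itemize}

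\textbf{Case 2: some $a_t>a_1$ or some $b_t>b_1$.}
\begin{itemize}
\item If some $a_t>a_1$, subtracting two $a$-leaf equations gives $(a_t-a_1)c_0=a_t-a_1$, so $c_0=1$. The $p$-pendant equation minus the $a_1$-leaf equation then gives $(r+p-a_1)=\sigma_a+s-a_1$, i.e.\ $\sigma_a+s=r+p$. The $b$-leaf equation minus the $a_1$-leaf equation with $c_0=1$ gives $s=r+p$. Together these force $\sigma_a=0$, which is impossible. This parallels the sixth line of \eqref{sys8}.
\item If instead some $b_t>b_1$, again $c_0=1$. The leaf differences give $s=r+p$, and the $p$-pendant equation forces $\sigma_a=0$ as before, a contradiction.
\end{itemize}
So Case 2 is fast. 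Note that the asymmetry $q=0$ breaks the symmetry argument used at the end of Theorem~\ref{t3}, so both sides must be treated.

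The main obstacle is the bookkeeping in Case 1: deriving the three reduced equations so that they literally coincide with $L_i-R_iK=0$ of Lemma~\ref{lem:no-solution-3eq}, and checking that the $a=b$ subcase really yields a contradiction rather than a genuine family. If the direct substitution in the $a=b$ subcase does not immediately kill it, I would compute the determinant condition as in Subcase~1.1 of Theorem~\ref{t3}. I would then show that the resulting Diophantine equation forces $p=0$.
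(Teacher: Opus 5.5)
Your proposal follows the paper's proof essentially step for step: the same $(2r+2s+3)$-cell system, the same split into equal versus unequal star sizes, Lemma~\ref{lem:no-solution-3eq} for $a\neq b$, and $c_0=1$ forcing $\sigma_a=0$ in Case 2. Two things you add are worth keeping: your remark that $K=c_0$ must be an integer supplies a hypothesis of that lemma which the paper leaves implicit, and you rightly check $b_t>b_1$ separately since $q=0$ breaks the symmetry. The step you left hedged is immediate. With $a=b$, the $w$-equation minus the $z$-equation gives $(r+p-s)c_0=a(r-s)$, so $s=r+p$ yields $ap=0$ and no determinant computation is needed. One correction: the second and third equations of the lemma come from the $b$-centre and $u_2$ equations, because the $a$-centre equation is already used up in solving for $c_1$.
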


\begin{proof}
Let $T = T^{p,0}(a_1, \ldots, a_r \paramsep b_1, \ldots, b_s)$. Then $T$ has the equitable $(2r+2s+3)$-partition $\pi$ defined above. By Corollary \ref{1c}, $T$ has exactly three main eigenvalues if and only if the linear system \eqref{sys9} below has a unique solution:
{\small
\begin{equation} \tag{9}\label{sys9}
\makebox[0pt][l]{\hspace*{-.5\linewidth}$\displaystyle
\left\{
\begin{aligned}
& c_2 + c_1 + (a_i + 1)c_0 = a_i + r + p + 1, && \hspace{-5em}1 \le i \le r; \\
& c_2 + c_1 + (b_j + 1)c_0 = b_j + s + 1, && \hspace{-5em}1 \le j \le s; \\
& c_2 + c_1 + (r + p + 1)c_0 = \sigma_a + r + s + p + 1, \\
& c_2 + (a_i + 1)c_1 + (a_i + r + p + 1)c_0 = a_i^2 + a_i + \sigma_a + r + s + p + 1, && \hspace{-5em}1 \le i \le r; \\
& c_2 + (b_j + 1)c_1 + (b_j + s + 1)c_0 = b_j^2 + b_j + \sigma_b + r + s + p + 1, && \hspace{-5em}1 \le j \le s; \\
& c_2 + (r + p + 1)c_1 + (\sigma_a + r + s + p + 1)c_0 = \sigma_a + \sigma_b + (r + p)(r + p + 1) + r + s + p + 1, \\
& c_2 + (s + 1)c_1 + (\sigma_b + r + s + p + 1)c_0 = \sigma_a + \sigma_b + s(s + 1) + r + s + p + 1.
\end{aligned}
\right.
$}
\end{equation}
}

\text{where } $\sigma_a = \sum_{i=1}^r a_i,$ $  \sigma_b = \sum_{j=1}^s b_j.$

\noindent\textit{Case 1.}
Assume $a_i = a$ for $1 \le i \le r$ and $b_j = b$ for $1 \le j \le s$.
Then $\sigma_a = ra$ and $\sigma_b = sb$, and system \eqref{sys9} is equivalent to
\begin{equation}\tag{9$'$}\label{sys9'}
\left\{
\begin{aligned}
& c_2+c_1+(a+1)c_0 = a+r+p+1,\\[2pt]
& (b-a)c_0 = b-a+s-r-p,\\[2pt]
& (r+p-a)c_0 = a(r-1)+s,\\[2pt]
& a\,c_1+(r+p)c_0 = a^2+ra+s,\\[2pt]
& b\,c_1+(b-a+s)c_0 = b^2+(s+1)b+s-a,\\[2pt]
& (r+p)c_1+\bigl(ra+r+s+p-a\bigr)c_0
   = ra+sb+(r+p)(r+p+1)+s-a,\\[2pt]
& s\,c_1+\bigl(sb+r+s+p-a\bigr)c_0
   = ra+sb+s(s+1)+s-a.
\end{aligned}
\right.
\end{equation}

\noindent{\textit{Subcase 1.1.}} $a=b$

The second equation of \eqref{sys9'} implies $(b-a)c_0 = s - r - p$. Since $a=b$, this forces $s - r - p = 0$, and hence $s = r+p$. Substituting $b=a$ and $s=r+p$ into the fourth and fifth equations of  \eqref{sys9'} and eliminating $c_1$ yields
\[
ap = 0.
\]
Since $a \ge 1$ and $p \ge 1$, this leads to a contradiction. Therefore, the system admits no solution in this subcase.

\noindent{\textit{Subcase 1.2.}} $a\neq b$.

Let $K$ denote the value of $c_0$ uniquely determined by the second equation of \eqref{sys9'}, that is,
\[
K := 1+\frac{s-r-p}{\,b-a\,}.
\]
Applying Gaussian elimination to \eqref{sys9'} yields the equivalent system
\begin{equation}\tag{10}\label{sys10}
\makebox[0pt][l]{\hspace*{-.5\linewidth}$\displaystyle
\left\{
\begin{aligned}
& c_2+c_1+(a+1)c_0 = a+r+p+1,\\[2pt]
& a\,c_1+(r+p)c_0 = a^2+ra+s,\\[2pt]
& (b-a)c_0 = b-a+s-r-p,\\[2pt]
& (ar+s-a)-(r+p-a)K = 0,\\[2pt]
& \Bigl[\, s(ab+a-b) + (b-a)(ab+a) - abr \,\Bigr]
  - \Bigl[\, as + a(b-a) - b(r+p) \,\Bigr]K = 0,\\[2pt]
& \Bigl[\, s(ab+a-r-p) + a(p+1)(r+p-a) \,\Bigr]
  - \Bigl[\, as + a^2(r-1) - (r+p)(r+p-a) \,\Bigr]K = 0,\\[2pt]
& \Bigl[\, (a-1)s^2 + s(ab-a^2-ar+2a) + a^2(r-1) \,\Bigr]
  - \Bigl[\, s(ab+a-r-p) + a(r+p-a) \,\Bigr]K = 0.
\end{aligned}
\right.
$}
\end{equation}

In this case, the linear system \eqref{sys9} admits a solution if and only if the reduced system \eqref{sys10} is consistent. However, the subsystem composed of the fourth, fifth, and seventh equations in \eqref{sys10}  corresponds precisely to the conditions analyzed in Lemma~\ref{lem:no-solution-3eq}. By Lemma~\ref{lem:no-solution-3eq}, this subsystem admits no  solution. Consequently, system \eqref{sys10}  is inconsistent, implying that the linear system \eqref{sys9} has no solution.

\noindent{\textit{Case 2.}} Let $t = \min\{i; (a_1 < a_i) \wedge (2 \le i \le r)\}$.

Eliminating $c_2, c_1, c_0$ from \eqref{sys9} yields the equivalent system
\begin{equation}\tag{11}
\left\{
\begin{aligned}
& c_2 + c_1 + (a_1+1)c_0 = a_1 + r + p + 1, \\[2pt]
& a_1 c_1 + (r+p)c_0 = a_1^2 + s + \sigma_a, \\[2pt]
& c_0 = 1, \\[2pt]
& s - r - p = 0, \\[2pt]
& s + \sigma_a - r - p = 0, \\[2pt]
& a_1 a_i + r + p - s - \sigma_a = 0, && t\le i\le r, \\[2pt]
& a_1 \sigma_b + b_j\bigl(a_1 b_j - a_1^2 + r + p - s - \sigma_a\bigr) = 0,\qquad 1\le j\le s, \\[2pt]
& (a_1+1)(r+p)^2 - (a_1^2 + s + \sigma_a)(r+p) + a_1\sigma_b = 0, \\[2pt]
& (s-a_1)\bigl(a_1 s + r + p - s - \sigma_a\bigr) = 0.
\end{aligned}
\right.
\end{equation}
Subtracting the fourth equation from the fifth immediately implies $\sigma_a = 0$, which contradicts the fact that $\sigma_a \ge r \ge 1$.
Analogously, assuming there exists $t \in \{2, \dots, s\}$ such that $b_t > b_1$ leads to a contradiction.
Consequently, the system is inconsistent, and $T^{p,0}(a_1, \ldots, a_r \paramsep b_1, \ldots, b_s)$ cannot admit exactly three main eigenvalues.
\end{proof}

\begin{corollary}
Let $T = T^{p,q}(a_1, \dots, a_r \paramsep b_1, \dots, b_s)$ be the tree of diameter 5 with exactly three main eigenvalues. Then:
$$
\varphi_M(T, x) = 
\begin{cases} 
x^3 - (1+\delta \kappa)x^2 - (a-\delta \kappa)x - (\kappa^2 - \delta a\kappa - a), & \text{if } T \in \mathcal{T}; \\
x^3 - x^2 - (a+r)x + a, & \text{if } T \cong T_r(a).
\end{cases}
$$

\end{corollary}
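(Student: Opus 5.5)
The plan is to use that, in each case where a diameter-5 tree has exactly three main eigenvalues, the coefficients $(c_0,c_1,c_2)$ are already pinned down by the reduced linear systems in the proofs of Theorems~\ref{t2}--\ref{t4}. By the proof of Theorem~\ref{t1}, $\varphi_M(x)=x^3-c_0x^2-c_1x-c_2$, where $(c_0,c_1,c_2)$ is the unique solution of the system attached to the equitable partition $\pi$. So it suffices to solve those systems explicitly.

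First, dispose of the case split. Theorems~\ref{t2}, \ref{t3}, \ref{t4} cover, respectively, $p=q=0$, $p,q\ge1$, and $p\ge1,q=0$. The remaining case $p=0,q\ge1$ follows from Theorem~\ref{t4} by interchanging the roles of $u_1$ and $u_2$. Hence $T\cong T_r(a)$ or $T\in\mathcal T$, and in both cases $T$ is symmetric with all star sizes equal.

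For $T\cong T_r(a)$, work with system~\eqref{sys2'} from Subcase~1.1 of Theorem~\ref{t2}. Its determinant $\Delta$ was shown to be nonzero, so it has a unique solution. I will simply verify that $(c_0,c_1,c_2)=(1,\,a+r,\,-a)$ satisfies all three equations:
\[
-a+(a+r)+(a+1)=a+r+1,\qquad a(a+r)+r=a^2+ar+r,
\]
\[
r(a+r)+(ar-a+2r)=2ar+r^2+2r-a.
\]
This gives $x^3-x^2-(a+r)x+a$.

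For $T\in\mathcal T$, use system~\eqref{sys6''} with $\rho=r+p=\kappa^2$ and $a(\kappa+\delta r)=\kappa^3$ from Lemma~\ref{le1}. The key identity is
\[
\delta ar=\kappa(\kappa^2-a),
\]
obtained by expanding $a\kappa+\delta ar=\kappa^3$. Note that $\rho-a=\kappa^2-a\ne0$.

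The third equation then gives
\[
c_0=1+\frac{ar}{\kappa^2-a}=1+\delta\kappa.
\]
Substituting into $ac_1=a^2+ar+\rho-\rho c_0$ and using $ar-\delta\kappa^3=-\delta\kappa a$ yields $c_1=a-\delta\kappa$.

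Finally, in $ac_2=ap+a-\rho-(a^2+a-\rho)c_0$, substitute $p=\kappa^2-r$ and replace $ar$ using the key identity. The terms in $\kappa^3$ and $\kappa a$ cancel, leaving $a\kappa^2-a^2-\delta\kappa a^2$. Dividing by $a$ gives $c_2=\kappa^2-a-\delta\kappa a$, which matches the stated polynomial.

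The only real care point is the sign bookkeeping with $\delta$ when eliminating $r$. Everything else is direct substitution into systems already proven uniquely solvable.
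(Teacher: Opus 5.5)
Your proof is correct. For $T\in\mathcal T$ it follows the paper's route: solve the reduced linear system. The paper only asserts that the systems give $(c_0,c_1,c_2)=(1+\delta\kappa,\,a-\delta\kappa,\,\kappa^2-\delta a\kappa-a)$. Your elimination from \eqref{sys6''} supplies the computation the paper omits. It uses the identity $\delta ar=\kappa(\kappa^2-a)$, which also gives $\rho-a\neq 0$ because $ar>0$.

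For $T\cong T_r(a)$ your route differs from the paper's. The paper passes to the coarser equitable partition into three cells: star leaves, star centres, and $\{u_1,u_2\}$. Its divisor matrix is $B_\pi=\begin{pmatrix}0&1&0\\ a&0&1\\ 0&r&1\end{pmatrix}$. Since $\varphi_M$ divides $\varphi_{B_\pi}$ and both are monic of degree $3$, the paper reads off $\varphi_M=\det(xI-B_\pi)$. That is a one-line determinant and does not depend on the row reduction that produces \eqref{sys2'}.

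Your check that $(1,a+r,-a)$ satisfies \eqref{sys2'} instead keeps both cases inside the walk-relation framework of Theorem~\ref{t1}. It rests on the paper's claimed equivalence of \eqref{sys2'} with \eqref{sys1}. That equivalence is correct: these coefficients do reproduce $d_3$ on leaves, centres and $u_i$.

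Your remark that the case $p=0$, $q\ge 1$ reduces to Theorem~\ref{t4} by swapping $u_1$ and $u_2$ makes explicit a step the paper leaves implicit.
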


\begin{proof}
If $T \cong T_r(a)$, consider the equitable tripartition $\pi(T_r(a))$. The divisor matrix is
\[
  B_\pi =
  \begin{pmatrix}
    0 & 1 & 0\\
    a & 0 & 1\\
    0 & r & 1
  \end{pmatrix},
\]
hence
\[
  \varphi_M(T,x)
  = \det(xI - B_\pi)
  = x^3 - x^2 - (a+r)x + a.
\]

If $T \in \mathcal{T}$, then by Corollary \ref{1c} and the linear systems obtained earlier, the unique coefficients $c_0, c_1, c_2$ in
\[
  d_3(v) = c_0 d_2(v) + c_1 d(v) + c_2
\]
are given by
\[
  (c_0, c_1, c_2) = (1+\delta \kappa, \ a - \delta \kappa, \ \kappa^2 - \delta a\kappa - a),
\]
where  $a = \kappa^3/\eta$ as before. Consequently,
\begin{align*}
  \varphi_M(T,x)
  &= x^3 - c_0 x^2 - c_1 x - c_2 \\
  &= x^3 - (1+\delta \kappa)x^2 - (a - \delta \kappa)x - (\kappa^2 - \delta a\kappa - a).
\end{align*}
\end{proof}

\section{Conclusion}

In this paper, we have provided a complete classification of trees of diameter $5$ with exactly three main eigenvalues. By leveraging the theory of equitable partitions, we reduced the spectral condition to explicit Diophantine constraints on the structural parameters. The unique solvability, and in particular the consistency, of the resulting systems were rigorously determined by using Gr\"obner basis theory to handle the intricate algebraic constraints. This work extends the classification of trees of diameter $4$ established by Fran\c{c}a et al.~\cite{FrBrJa}.

The following example shows that, unlike the case of two main eigenvalues \cite{HouZhou2005}, no diameter bound holds for trees with exactly three main eigenvalues.

We recall the notation of generalized Bethe trees in the sense of Rojo and Soto \cite{RoSo}. Let $B(d_h,d_{h-1},\ldots,d_2)$ be the rooted tree with levels $C_h,C_{h-1},\ldots,C_2,C_1$, where $C_h$ consists of the root and $C_1$ consists of the ordinary leaves. Each vertex in $C_h$ has $d_h$ children in $C_{h-1}$; for $2\le i\le h-1$, each vertex in $C_i$ has one parent in $C_{i+1}$ and $d_i-1$ children in $C_{i-1}$; and the vertices in $C_1$ have degree $1$. For nonnegative integers $t_h,t_{h-1},\ldots,t_2$, we call $B(d_h,d_{h-1},\ldots,d_2 \paramsep t_h,t_{h-1},\ldots,t_2)$ the level-uniform pendent extension obtained from $B(d_h,d_{h-1},\ldots,d_2)$ by attaching $t_i$ pendent vertices to each vertex of $C_i$, for $i=2,\ldots,h$.

\begin{example}\label{ex:main-unbounded-diameter}
For every integer $h\ge 6$, let
\[
T_h =
B(\underbrace{7,\ldots,7}_{h-5},11,3,163,13
\paramsep
\underbrace{84,\ldots,84}_{h-5},80,52,0,0).
\]
Equivalently, for the levels $C_h,\ldots,C_1$ we have
\[
(d_i,t_i)=
\begin{cases}
(7,84),&6\le i\le h,\\
(11,80),&i=5,\\
(3,52),&i=4,\\
(163,0),&i=3,\\
(13,0),&i=2.
\end{cases}
\]
Let the cells be ordered as
\[
(C_h,C_{h-1},\ldots,C_1,P_h,P_{h-1},\ldots,P_4),
\]
where $P_i$ is the set of pendent vertices attached to $C_i$. The divisor matrix $B_h=(b_{X,Y})$ is determined by
\[
b_{X,Y}=
\begin{cases}
7, & (X,Y)=(C_h,C_{h-1}),\\
84, & (X,Y)=(C_h,P_h),\\
1, & (X,Y)=(C_i,C_{i+1}),\ 4\le i\le h-1,\\
d_i-1, & (X,Y)=(C_i,C_{i-1}),\ 4\le i\le h-1,\\
t_i, & (X,Y)=(C_i,P_i),\ 4\le i\le h-1,\\
1, & (X,Y)=(C_3,C_4),(C_2,C_3),(C_1,C_2),\\
162, & (X,Y)=(C_3,C_2),\\
12, & (X,Y)=(C_2,C_1),\\
1, & (X,Y)=(P_i,C_i),\ 4\le i\le h,\\
0, & \text{otherwise}.
\end{cases}
\]
For $v_i=B_h^i\mathbf{j}$, where $\mathbf{j}$ has length $2h-3$, we have
\[
\begin{aligned}
v_0&=\mathbf{j},\\
v_1&=(\underbrace{91,\ldots,91}_{h-4},55,163,13,1
\paramsep \underbrace{1,\ldots,1}_{h-3})^{\top},\\
v_2&=(\underbrace{721,\ldots,721}_{h-4},469,2161,175,13
\paramsep \underbrace{91,\ldots,91}_{h-4},55)^{\top},\\
v_3&=(\underbrace{12691,\ldots,12691}_{h-4},7903,28819,2317,175
\paramsep \underbrace{721,\ldots,721}_{h-4},469)^{\top}
      =7v_2+84v_1.
\end{aligned}
\]
Thus $\rank W(B_h)=3$. By Proposition~\ref{prop:quotient-walk-rank}, $T_h$ has exactly three main eigenvalues. Hence, by Theorem~\ref{t1},
\[
  \varphi_M(T_h,x)=x^3-7x^2-84x=x(x^2-7x-84).
\]
Also $\operatorname{diam}(T_h)=2(h-1)$. Hence trees with exactly three main eigenvalues have unbounded diameter.
\end{example}

\begin{figure}[!ht]
\centering
\begin{tikzpicture}[
  x=1cm,y=1cm,
  v/.style={circle, fill=black, inner sep=1.35pt},
  e/.style={draw, line width=0.42pt, line cap=round},
  lab/.style={font=\scriptsize, inner sep=0.8pt},
  br/.style={decorate, decoration={brace, mirror, amplitude=3pt}, line width=0.35pt}
]
  \node[v] (Ch) at (0,0) {};
  \node[v] (Chm) at (1.25,0) {};
  \node[v] (Csix) at (2.85,0) {};
  \node[v] (Cfive) at (4.20,0) {};
  \node[v] (Cfour) at (5.55,0) {};
  \node[v] (Cthree) at (6.85,0) {};
  \node[v] (Ctwo) at (8.15,0) {};
  \node[lab, below=3pt] at (Ch) {$C_h$};
  \node[lab, below=3pt] at (Chm) {$C_{h-1}$};
  \node[lab, below=3pt] at (Csix) {$C_6$};
  \node[lab, below=3pt] at (Cfive) {$C_5$};
  \node[lab, below=3pt] at (Cfour) {$C_4$};
  \node[lab, below=3pt] at (Cthree) {$C_3$};
  \node[lab, below=3pt] at (Ctwo) {$C_2$};
  \node[lab, below=18pt] at (Ch) {$(7,84)$};
  \node[lab, below=18pt] at (Chm) {$(7,84)$};
  \node[lab, below=18pt] at (Csix) {$(7,84)$};
  \node[lab, below=18pt] at (Cfive) {$(11,80)$};
  \node[lab, below=18pt] at (Cfour) {$(3,52)$};
  \node[lab, below=18pt] at (Cthree) {$(163,0)$};
  \node[lab, below=18pt] at (Ctwo) {$(13,0)$};
  \foreach \name/\x/\m in {Ch/0/84,Chm/1.25/84,Csix/2.85/84,Cfive/4.20/80,Cfour/5.55/52}{
    \node[v] (\name-p1) at (\x-0.16,0.82) {};
    \node[v] (\name-p2) at (\x+0.16,1.42) {};
    \draw[e] (\name)--(\name-p1);
    \draw[e] (\name)--(\name-p2);
    \node[lab] at (\x,1.10) {$\vdots$};
    \node[lab, above=2pt] at (\x,1.50) {$\m$};
  }
  \draw[e] (Ch)--(Chm);
  \draw[e,densely dotted] (Chm)--(Csix);
  \draw[e] (Csix)--(Cfive)--(Cfour)--(Cthree)--(Ctwo);
  \node[lab, left=5pt] at (Ch) {root};

  \foreach \k/\dy in {1/0.36,2/0,3/-0.36}{
    \node[v] (leaf\k) at (9.25,\dy) {};
    \draw[e] (Ctwo)--(leaf\k);
  }
  \node[lab] at (9.25,0.18) {$\vdots$};
  \node[lab, right=3pt] at (leaf2) {$12$ leaves};

  \draw[br] (-0.18,-1.42)--(3.03,-1.42) node[lab, midway, below=4pt] {$h-5$ levels};
\end{tikzpicture}
\caption{Labeled $T_h$.}
\label{fig:infinite_family_Th}
\end{figure}

This family shows that the diameters of trees with exactly three main eigenvalues are unbounded.

\section*{Data Availability Statement}
\phantomsection
\label{sec:data-availability}
The verification computations and exhaustive search programs for trees with exactly three main eigenvalues in this paper were implemented in SageMath and are
publicly available as Jupyter notebook files. The corresponding repository is hosted on GitHub at
\url{https://github.com/ditf015/trees-with-3-main-eigenvalues}.
A citable archived release (v1.1.1) is available on Zenodo:
\url{https://doi.org/10.5281/zenodo.19440113}.

\bibliographystyle{abbrv}
\bibliography{refs}

\end{document}